\documentclass[12pt,reqno]{amsart}

\usepackage[latin1]{inputenc}
\usepackage{amsmath}
\usepackage{amsfonts}
\usepackage{amssymb}
\usepackage{graphics}
\usepackage{enumerate}
\usepackage{subfigure}
\usepackage{amssymb,amsmath,amsthm,amscd,epsf,latexsym,verbatim,graphicx,amsfonts}
\input epsf.tex

\topmargin -0.5in
\textheight 9.0in
\oddsidemargin -0.0in
\evensidemargin -0.0in
\textwidth 6.5in

\usepackage{amscd}
\usepackage{amsmath}
\usepackage{amssymb}
\usepackage{amsthm}
\usepackage{epsf}
\usepackage{latexsym}
\usepackage{verbatim}
\input epsf.tex


\theoremstyle{plain}
\newtheorem{theorem}{Theorem}[section]

\newtheorem{corollary}[theorem]{Corollary}
\newtheorem{lemma}[theorem]{Lemma}
\newtheorem{prop}[theorem]{Proposition}
\theoremstyle{definition}

\theoremstyle{remark}

\newcommand{\nri}{n\rightarrow\infty}

\newcommand{\bbR}{\mathbb{R}}
\newcommand{\bbC}{\mathbb{C}}

\newcommand{\bbD}{\mathbb{D}}
\newcommand{\bbJ}{\mathbb{J}}
\newcommand{\bbN}{\mathbb{N}}

\newcommand{\eitheta}{e^{i\theta}}
\newcommand{\tha}{\theta_{\alpha}}

\DeclareMathOperator*{\supp}{supp}
\DeclareMathOperator*{\Real}{Re}
\DeclareMathOperator*{\Imag}{Im}

\title[]{Universality at an Endpoint for Orthogonal Polynomials with Geronimus-Type Weights}
\author[]{Brian Simanek}
\date{}

\begin{document}
\maketitle

\begin{abstract}
We provide a new closed form expression for the Geronimus polynomials on the unit circle and use it to obtain new results and formulas.  Among our results is a universality result at an endpoint of an arc for polynomials orthogonal with respect to a Geronimus type weight on an arc of the unit circle.  The key tool is a formula of McLaughlin for the $n^{th}$ power of a $2\times2$ matrix, which we use to derive convenient formulas for Geronimus polynomials.
\end{abstract}

\vspace{4mm}

\footnotesize\noindent\textbf{Keywords:} Geronimus Polynomials, Chebyshev Polynomials, Universality

\vspace{2mm}

\noindent\textbf{Mathematics Subject Classification:} Primary 42C05; Secondary 33C45

\vspace{2mm}

\normalsize

\section{Introduction}\label{intro}

Let $\mu$ be a probability measure whose support is an infinite and compact subset of the unit circle $\partial\bbD$ in the complex plane.  Let $\{\Phi_n(z;\mu)\}_{n=0}^{\infty}$ be the sequence of monic orthogonal polynomials for the measure $\mu$ and let $\{\varphi_n(z;\mu)\}_{n=0}^{\infty}$ be the sequence of orthonormal polynomials.  It is well-known that corresponding to this measure is a sequence of Verblunsky coefficients $\{\alpha_n\}_{n=0}^{\infty}\in\bbD^{\bbN_0}$ so that
\begin{align}\label{sr}
\Phi_{n+1}(z;\mu)=z\Phi_n(z;\mu)-\bar{\alpha}_n\Phi_n^*(z;\mu),\qquad n\in\bbN_0,
\end{align}
where $\Phi_n^*(z;\mu):=z^n\overline{\Phi_n(1/\bar{z};\mu)}$.  The formula (\ref{sr}) is often called the \textit{Szeg\H{o} recursion} (see \cite[Section 1.5]{OPUC1}).  The relationship between infinitely supported probability measures on the unit circle and sequences of Verblunsky coefficients is a bijection (see \cite[Section 1.7]{OPUC1}) and there is a substantial literature describing the relationship between the sequence and the corresponding measure (see \cite{OPUC1,OPUC2} and references therein).

Our focus in this work will be on the so-called \textit{Geronimus polynomials}, which are orthogonal with respect to the measure corresponding to the sequence of Verblunsky coefficients $\{\alpha,\alpha,\alpha,\ldots\}$ for some $\alpha\in\bbD$.  The measure of orthogonality in this case is supported on an arc of the unit circle whose length depends on $\alpha$ and possibly a mass point outside this arc, whose weight depends on $\alpha$.  The Geronimus polynomials have been studied before (see \cite{Golinskii,GNPV,GNV,BHMD,Cargile,Pinter}) and there is a known closed form expression for them (see also \cite[Section 1.6]{OPUC1}).  This formula was later used by Lubinsky and Nguyen in \cite{LubNg} to obtain a universality result for certain polynomial reproducing kernels at an interior point of the arc supporting the measure of orthogonality.  Our goal will be to provide a new closed form expression for the Geronimus polynomials, which will enable us to prove several new results and formulas, including a universality result at the endpoint of the arc supporting the measure of orthogonality.

The key tool in our analysis comes from matrix theory.  The Szeg\H{o} recursion  can be written
\[
\begin{pmatrix}
\Phi_{n+1}(z;\mu)\\ \Phi_{n+1}^*(z;\mu)
\end{pmatrix}=
\begin{pmatrix}
z & -\bar{\alpha}_n\\ -\alpha_nz & 1
\end{pmatrix}
\begin{pmatrix}
\Phi_{n}(z;\mu)\\ \Phi_{n}^*(z;\mu)
\end{pmatrix},
\]
(see \cite[Section 3.2]{OPUC1}).  The $2\times2$ matrix in this relation is called the $n^{th}$ \textit{transfer matrix} for $\mu$.  If the Verblunsky coefficients form a constant sequence, then one can recover the polynomial $\Phi_n(z;\mu)$ in a straightforward way by using the following formula for the $n^{th}$ power of a $2\times2$ matrix.

\begin{theorem}[McLaughlin, \cite{McL}]\label{wmain}
Let $A$ be a $2\times 2$ matrix given by
\[
A=\begin{pmatrix}a & b\\ c & d\end{pmatrix}.
\]
If $R$ denotes the trace of $A$ and $D$ denotes its determinant, then
\[
A^n=\begin{pmatrix}y_n-dy_{n-1} & by_{n-1}\\ cy_{n-1} & y_n-ay_{n-1}\end{pmatrix},
\]
where
\begin{align}\label{ydef}
y_n=\sum_{m=0}^{\left\lfloor\frac{n}{2}\right\rfloor}\binom{n-m}{m}R^{n-2m}(-D)^m.
\end{align}
\end{theorem}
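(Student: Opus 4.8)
The plan is to prove Theorem~\ref{wmain} by induction on $n$, using the Cayley--Hamilton theorem to collapse every power of $A$ into the linear span of $A$ and the identity, and then to verify separately that the explicit sum in (\ref{ydef}) is the solution of the recursion that emerges.

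First I would record the Cayley--Hamilton identity for a $2\times2$ matrix, $A^2 = RA - DI$, where $I$ is the identity matrix. Define $y_{-1}=0$, $y_0=1$, and let $y_n$ be \emph{defined} by the three-term recursion $y_n = R\,y_{n-1} - D\,y_{n-2}$. An easy induction then gives
\[
A^n = y_{n-1}A - D\,y_{n-2}I,\qquad n\ge 1 .
\]
The cases $n=1,2$ are immediate ($A=A$ and $A^2=RA-DI$), and the inductive step is the one-line computation
\[
A^{n+1}=A\cdot A^n = y_{n-1}A^2 - D\,y_{n-2}A = (R\,y_{n-1}-D\,y_{n-2})A - D\,y_{n-1}I = y_n A - D\,y_{n-1}I .
\]
Writing out the entries with $A=\left(\begin{smallmatrix}a&b\\ c&d\end{smallmatrix}\right)$ and using $a+d=R$ to rewrite $a\,y_{n-1}-D\,y_{n-2} = y_n - d\,y_{n-1}$ (and symmetrically $d\,y_{n-1}-D\,y_{n-2}=y_n-a\,y_{n-1}$) reproduces exactly the matrix displayed in the statement, with the off-diagonal entries $b\,y_{n-1}$ and $c\,y_{n-1}$ coming directly from $y_{n-1}A$.

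It remains to check that the closed form (\ref{ydef}) solves this recursion with the stated initial data. The initial conditions are immediate: for $n=0,1$ only the $m=0$ term survives, giving $y_0=1$ and $y_1=R$, while the empty sum gives $y_{-1}=0$. For the recursion I would substitute (\ref{ydef}) into $R\,y_{n-1}-D\,y_{n-2}$, reindex the second sum by $m\mapsto m+1$, and collect the coefficient of $R^{n-2m}(-D)^m$; Pascal's rule $\binom{n-1-m}{m}+\binom{n-1-m}{m-1}=\binom{n-m}{m}$ then matches it with the coefficient appearing in $y_n$. The one point requiring care --- and the step I expect to be the main (if modest) obstacle --- is the bookkeeping of the upper summation limits as the parity of $n$ changes, so that the top term (when $n$ is even, the $m=n/2$ term) is correctly supplied by the boundary contribution of the reindexed sum; a short parity check disposes of this, the underlying reason being that the binomial coefficients vanish automatically outside their natural range.

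As an alternative one can recognize $y_n$ as $D^{n/2}U_n\!\bigl(R/(2\sqrt{D})\bigr)$, a rescaled Chebyshev polynomial of the second kind, and derive the formula by diagonalizing $A$ when its eigenvalues are distinct and then invoking a polynomial-identity/continuity argument; but the induction above is preferable because it avoids any case distinction on the spectrum of $A$ and works verbatim over any commutative ring.
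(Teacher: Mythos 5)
Your proof is correct. Note first that the paper itself contains no proof of this statement: it is imported verbatim from McLaughlin's article \cite{McL} and used as a black box, so there is no in-paper argument to compare against; your write-up is a complete, self-contained substitute. The chain of steps checks out: Cayley--Hamilton gives $A^2=RA-DI$; the induction yields $A^n=y_{n-1}A-Dy_{n-2}I$ with $y_n$ defined by $y_n=Ry_{n-1}-Dy_{n-2}$, $y_{-1}=0$, $y_0=1$; the trace relation $a+d=R$ converts the diagonal entries $ay_{n-1}-Dy_{n-2}$ and $dy_{n-1}-Dy_{n-2}$ into $y_n-dy_{n-1}$ and $y_n-ay_{n-1}$; and Pascal's rule $\binom{n-1-m}{m}+\binom{n-1-m}{m-1}=\binom{n-m}{m}$ shows the closed form \eqref{ydef} solves the recursion with the right initial data. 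The parity issue you flag is handled exactly as you say: when $n$ is even the top term $m=n/2$ receives no contribution from $R\,y_{n-1}$ because $\binom{n-1-m}{m}=\binom{n/2-1}{n/2}=0$, and the reindexed $-D\,y_{n-2}$ sum supplies $\binom{n/2-1}{n/2-1}=1=\binom{n/2}{n/2}$, so the identity closes. Your closing remark that $y_n$ is a rescaled Chebyshev polynomial of the second kind is also apt: it is precisely the identity $y_n(z)=\rho^n z^{n/2}U_n\bigl(\tfrac{z+1}{2\rho\sqrt z}\bigr)$ that the author exploits in the proof of Theorem \ref{geronclosed}, so your argument in fact makes that later step transparent, while the induction itself has the advantage of working over any commutative ring with no case distinction on the spectrum of $A$.
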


This simple result is all that we require to prove our new formula, which appears as Theorem \ref{geronclosed}.  Before we can state our results and formulas in Section \ref{geronsec}, we review some notation and terminology in the next section.  Finally, in Section \ref{univ}, we state and prove our universality result.

\section{Preliminaries}\label{prelim}

In this section we discuss some notation, formulas, and terminology that we will use throughout Sections \ref{geronsec} and \ref{univ}.  Many of the topics we discuss here are part of a rich theory that is too long to discuss in full detail.  Therefore, we will focus only on the specific formulas that we will need for our proofs.

\subsection{Chebyshev Polynomials}\label{chebsec}

The formula that we will obtain for the Geronimus polynomials involves the Chebyshev polynomials of the second kind, which are orthonormal with respect to the measure $\frac{2}{\pi}\sqrt{1-x^2}dx$ on the interval $[-1,1]$.  We denote this sequence of polynomials by $\{U_n\}_{n=0}^{\infty}$ and note that these polynomials are given by the formula
\begin{equation}\label{chebform1}
U_n(x)=\sum_{j=0}^{\left\lfloor\frac{n}{2}\right\rfloor}(-1)^j\binom{n-j}{j}(2x)^{n-2k}
\end{equation}
(see \cite[page 37]{BE}).  We also recall from \cite[page 37]{BE} that
\begin{equation}\label{chebform2}
U_n(x)=\frac{(x+\sqrt{x^2-1})^{n+1}-(x-\sqrt{x^2-1})^{n+1}}{2\sqrt{x^2-1}}
\end{equation}

We will make one use of the Chebyshev polynomials of the first kind, which are orthogonal with respect to the measure $\frac{1}{\pi\sqrt{1-x^2}}dx$ on the interval $[-1,1]$.  We will denote this sequence of polynomials by $\{T_n\}_{n=0}^{\infty}$ and define them by the formula
\begin{equation}\label{TU}
T_n(x)=U_n(x)-xU_{n-1}(x)
\end{equation}
(see \cite[page 37]{BE}).

\subsection{Second Kind Polynomials}\label{sk}

We have already mentioned that to every sequence of complex numbers $\{\alpha_0,\alpha_1,\alpha_2,\ldots\}\in\bbD^{\bbN_0}$ there corresponds a unique probability measure $\mu$ on the unit circle having infinite support.  The sequence $\{\alpha_n\}_{n=0}^{\infty}$ generates the sequence of orthogonal polynomials $\{\Phi_n(z;\mu)\}_{n=0}^{\infty}$ via the Szeg\H{o} recursion.  One can similarly generate a sequence of monic polynomials from the Szeg\H{o} recursion using the sequence $\{-\alpha_0,-\alpha_1,-\alpha_2,\ldots\}$, and the resulting polynomials are what we call the \textit{second kind polynomials} for the measure $\mu$ and we denote them by $\{\Psi_n(z;\mu)\}_{n=0}^{\infty}$ as in \cite{OPUC1,OPUC2}.  The polynomials $\{\Psi_n(z;\mu)\}_{n=0}^{\infty}$ are also orthogonal with respect to a probability measure on the unit circle, which is in the family of Aleksandrov measures for the measure $\mu$.  We will not make use of this particular fact, so we refer the reader to \cite[Section 1.3.9]{OPUC1} for details.

\subsection{Wall Polynomials}\label{wallsec}

Corresponding to every probability measure on the unit circle is a Schur function $f$, which maps $\bbD$ to itself.  When the support of $\mu$ is finite, this map is a Blaschke product, but when the support is infinite, there is a canonical pair of sequences of polynomials $\{A_n\}_{n=0}^{\infty}$ and $\{B_n\}_{n=0}^{\infty}$ such that $A_n/B_n$ converges to $f$ uniformly on compact subsets of $\bbD$ as $\nri$ (see \cite[Section 1.3.8]{OPUC1}).  These polynomials are called the \textit{Wall polynomials} for the measure $\mu$ and the Pint\'{e}r-Nevai formulas (see \cite[Theorem 3.2.10]{OPUC1} or \cite{PN}) tell us that
\begin{align*}
A_n(z)&=\frac{\Psi_{n+1}^*(z;\mu)-\Phi_{n+1}^*(z;\mu)}{2z}\\
B_n(z)&=\frac{\Psi_{n+1}^*(z;\mu)+\Phi_{n+1}^*(z;\mu)}{2}
\end{align*}

\subsection{Paraorthogonal Polynomials}\label{popuc}

Suppose $\mu$ is a probability measure on the unit circle having $\{\Phi_n(z;\mu)\}_{n=0}^{\infty}$ as its monic orthogonal polynomials.  For each $\beta\in\partial\bbD$ and each $n\in\bbN_0$, one defines the paraorthogonal polynomial $\Phi_{n+1}^{(\beta)}(z;\mu)$ by
\[
\Phi_{n+1}^{(\beta)}(z;\mu):=z\Phi_n(z;\mu)-\bar{\beta}\Phi_n^*(z;\mu).
\]
We also define
\[
\Psi_{n+1}^{(\beta)}(z;\mu):=z\Psi_n(z;\mu)-\bar{\beta}\Psi_n^*(z;\mu)
\]
for each $n\in\bbN_0$.  Paraorthogonal polynomials were introduced in \cite{JNT} and have the property that all of their zeros are simple and lie on the unit circle.  Paraorthogonal polynomials arising from Geronimus polynomials have been previously considered in \cite{CLMSR}.

\subsection{Regularity}\label{reg}

If $\mu$ is a probability measure on the unit circle with orthonormal polynomials $\{\varphi_n(z;\mu)\}_{n=0}^{\infty}$, let $\kappa_n$ denote the leading coefficient of $\varphi_n$.  Following the terminology from \cite{StaTo}, we will say that the measure $\mu$ is \textit{regular} if
\[
\lim_{\nri}\kappa_n^{1/n}=\frac{1}{\mbox{cap}(\supp(\mu))},
\]
where $\mbox{cap}(K)$ is the logarithmic capacity of the compact set $K$.  Regularity is a complicated notion and we will not discuss the technical details here.  We mention that a measure $\mu$ whose support is an arc $\Gamma$ of the unit circle is regular if and only if
\begin{equation}\label{regprop}
\lim_{\nri}\left(\sup_{{\deg(P)\leq n}\atop{P\not\equiv0}}\left[\frac{\|P\|_{L^{\infty}(\Gamma)}}{\|P\|_{L^2(\mu)}}\right]^{1/n}\right)=1
\end{equation}
(see \cite[Theorem 3.2.3(v)]{StaTo}).  Regularity indicates that the measure $\mu$ has sufficient density that a polynomial cannot have an exponentially small $L^2$-norm without having an exponentially small $L^{\infty}$-norm.

\medskip

With these preliminaries in hand, we can now proceed to state and prove our new results.

\section{Geronimus Polynomials}\label{geronsec}


For any $\alpha\in\bbD$, let $\rho=\sqrt{1-|\alpha|^2}$, and let $\mu_{\alpha}$ be the probability measure on the unit circle whose Verblunsky coefficients satisfy $\alpha_n=\alpha$ for all $n\in\bbN_0$.  This measure is supported on the arc $\{\eitheta:2\arcsin(|\alpha|)\leq\theta\leq2\pi-2\arcsin(|\alpha|)\}$ and possibly one point outside this arc (see \cite[Section 1.6]{OPUC1}).  Our first result is a new formula for the polynomials $\varphi_n(z;\mu_{\alpha})$ and $\varphi_n^*(z;\mu_{\alpha})$.

\begin{theorem}\label{geronclosed}
For any $\alpha\in\bbD$ and $n\in\bbN_0$, it holds that
\begin{align*}
\varphi_n(z;\mu_{\alpha})&=z^{n/2}\left(U_n\left(\frac{z+1}{2\rho\sqrt{z}}\right)-\frac{1+\bar{\alpha}}{\rho\sqrt{z}}U_{n-1}\left(\frac{z+1}{2\rho\sqrt{z}}\right)\right)\\
\varphi_n^*(z;\mu_{\alpha})&=z^{n/2}\left(U_n\left(\frac{z+1}{2\rho\sqrt{z}}\right)-\frac{\sqrt{z}(1+\alpha)}{\rho}U_{n-1}\left(\frac{z+1}{2\rho\sqrt{z}}\right)\right)
\end{align*}
where $U_{-1}=0$.
\end{theorem}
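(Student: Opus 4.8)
The plan is to apply McLaughlin's formula (Theorem~\ref{wmain}) directly to the constant transfer matrix, identify the resulting polynomial $y_n$ with a rescaled Chebyshev polynomial of the second kind, and then normalize. Since the Verblunsky coefficients are all equal to $\alpha$, the vector $\big(\Phi_n(z;\mu_\alpha),\Phi_n^*(z;\mu_\alpha)\big)^{T}$ is obtained by applying the $n^{th}$ power of the single matrix
\[
A=\begin{pmatrix} z & -\bar{\alpha}\\ -\alpha z & 1\end{pmatrix}
\]
to the initial vector $(1,1)^{T}$, because $\Phi_0=\Phi_0^*=1$. Here $R=\Tr(A)=z+1$ and $D=\det(A)=z-|\alpha|^2 z=\rho^2 z$. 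Plugging these into the definition \eqref{ydef} of $y_n$ and comparing with the explicit formula \eqref{chebform1} for $U_n$, I would show
\[
y_n=(\rho\sqrt{z})^{\,n}\,U_n\!\left(\frac{z+1}{2\rho\sqrt{z}}\right),
\]
since the substitution $2x=R/\sqrt{-D}$ turns $\sum_m\binom{n-m}{m}R^{n-2m}(-D)^m$ into $(-D)^{n/2}\sum_m(-1)^m\binom{n-m}{m}(2x)^{n-2m}$, and $-D=-\rho^2 z$ — here one must be slightly careful, but the powers of $\sqrt{z}$ and of $\rho$ bookkeep so that no branch ambiguity survives in the final polynomial identity (everything is a genuine polynomial in $z$, up to the overall $z^{n/2}$ that the statement carries).

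Next I would read off the two components of $A^n(1,1)^T$ from the matrix form in Theorem~\ref{wmain}:
\[
\Phi_n(z;\mu_\alpha)=(y_n-y_{n-1})-\bar\alpha\, y_{n-1},\qquad
\Phi_n^*(z;\mu_\alpha)=-\alpha z\, y_{n-1}+(y_n-z\,y_{n-1}),
\]
i.e. $\Phi_n=y_n-(1+\bar\alpha)y_{n-1}$ and $\Phi_n^*=y_n-z(1+\alpha)y_{n-1}$. Substituting the closed form for $y_n$ and $y_{n-1}$, factoring out $(\rho\sqrt z)^n$, and simplifying the coefficient of the $U_{n-1}$ term (the $y_{n-1}$ carries $(\rho\sqrt z)^{n-1}$, leaving a single factor of $\rho\sqrt z$ in the denominator) gives exactly the bracketed expressions in the theorem, but multiplied by $\rho^n$ rather than by the orthonormalizing constant. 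The last step is to pass from monic to orthonormal polynomials: $\varphi_n=\kappa_n\Phi_n$ where $\kappa_n=\prod_{j=0}^{n-1}\rho_j^{-1}=\rho^{-n}$ for constant Verblunsky coefficients (see \cite[Section~1.5]{OPUC1}), and $\varphi_n^*=\kappa_n\Phi_n^*$. Multiplying through by $\kappa_n=\rho^{-n}$ cancels the $\rho^n$ and produces precisely the claimed formulas, with $U_{-1}=0$ handling the case $n=0$ automatically.

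The main obstacle I expect is the square-root bookkeeping: $\sqrt{z}$, $\rho\sqrt z$, and $\sqrt{x^2-1}$ with $x=(z+1)/(2\rho\sqrt z)$ all appear, and one wants to be sure the final expressions are well-defined (indeed polynomial after removing the $z^{n/2}$ prefactor) rather than merely formal. The clean way to dispatch this is to verify the identity at the level of the polynomial $y_n$ using \eqref{chebform1} — which is manifestly a polynomial identity once both sides are multiplied by appropriate powers — and only afterwards rewrite $y_n$ in terms of $U_n$ evaluated at the Joukowski-type point; then \eqref{chebform2} can be invoked if one prefers, but it is not strictly needed. A secondary, purely cosmetic point is reconciling the floor in \eqref{ydef} with the floor in \eqref{chebform1} when $n$ is odd versus even; these match term-by-term, so no separate case analysis is required.
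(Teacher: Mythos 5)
Your proposal is correct and follows essentially the same route as the paper: write $(\Phi_n,\Phi_n^*)^T$ as the $n^{th}$ power of the constant transfer matrix applied to $(1,1)^T$, apply Theorem \ref{wmain} with $R=z+1$ and $D=\rho^2z$ to get $\Phi_n=y_n-(1+\bar{\alpha})y_{n-1}$ and $\Phi_n^*=y_n-z(1+\alpha)y_{n-1}$, identify $y_n=(\rho\sqrt{z})^{\,n}U_n\!\left(\frac{z+1}{2\rho\sqrt{z}}\right)$ via \eqref{chebform1}, and normalize using $\kappa_n=\rho^{-n}$. (One cosmetic correction: the substitution should be $2x=R/\sqrt{D}$, not $R/\sqrt{-D}$, so that the powers of $D$ produce the alternating signs in \eqref{chebform1}; the final identity for $y_n$ that you state is the correct one.)
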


\begin{proof}
Since the Verblunsky coefficients for the Geronimus polynomials are all the same, we have
\[
\begin{pmatrix}
\Phi_{n}(z;\mu_{\alpha})\\ \Phi_{n}^*(z;\mu_{\alpha})
\end{pmatrix}=
\begin{pmatrix}
z & -\bar{\alpha}\\ -\alpha z & 1
\end{pmatrix}^n
\begin{pmatrix}
1\\ 1
\end{pmatrix},
\]
Therefore, Theorem \ref{wmain} implies
\begin{align}
\label{phident}\Phi_n(z;\mu_{\alpha})&=y_n-(1+\bar{\alpha})y_{n-1},\\
\label{phisdent}\Phi_n^*(z;\mu_{\alpha})&=y_n-z(1+\alpha)y_{n-1},
\end{align}
where for any choice of $\sqrt{z}$ we have
\[
y_n(z)=\sum_{m=0}^{\left\lfloor\frac{n}{2}\right\rfloor}\binom{n-m}{m}(z+1)^{n-2m}(-\rho^2z)^m=\rho^nz^{n/2}U_n\left(\frac{z+1}{2\rho\sqrt{z}}\right).
\]
If we plug this into (\ref{phident}) and (\ref{phisdent}) and note that the leading coefficient of $\varphi_n$ is $\rho^{-n}$ (see \cite[Equation 1.5.22]{OPUC1}), we get the desired formulas.
\end{proof}

Let us explore some elementary consequences of Theorem \ref{geronclosed}.  First notice that we can find the generating function for the polynomials $\{\varphi_n(z;\mu_{\alpha})\}_{n=0}^{\infty}$.

\begin{corollary}\label{genfunc}
The polynomials $\{\varphi_n(z;\mu_{\alpha})\}_{n=0}^{\infty}$ satisfy
\[
\sum_{n=0}^{\infty}\varphi_n(z;\mu_{\alpha})t^n=\frac{\rho-t-t\bar{\alpha}}{\rho-t(z+1)+\rho zt^2}
\]
whenever this series converges.
\end{corollary}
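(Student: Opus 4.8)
The plan is to compute the generating function directly from the closed form in Theorem \ref{geronclosed} by summing the two pieces separately. Write $w = \frac{z+1}{2\rho\sqrt z}$ and use the well-known generating function for Chebyshev polynomials of the second kind, namely $\sum_{n=0}^\infty U_n(w) s^n = \frac{1}{1 - 2ws + s^2}$, which converges for $|s|$ small. The subtlety is that the variable multiplying $U_n$ in Theorem \ref{geronclosed} is not $t$ but $s := \sqrt z\, t$, because of the $z^{n/2}$ factor: indeed $z^{n/2} U_n(w) t^n = U_n(w)(\sqrt z\, t)^n$. So the first term $\sum_n z^{n/2} U_n(w) t^n$ becomes $\frac{1}{1 - 2w\sqrt z\, t + z t^2}$. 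Since $2w\sqrt z = \frac{z+1}{\rho}$, this is $\frac{1}{1 - \frac{z+1}{\rho} t + z t^2} = \frac{\rho}{\rho - (z+1)t + \rho z t^2}$, which is exactly $\rho$ times the claimed denominator's reciprocal — a good sign.

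Next I would handle the second term. In $\varphi_n$ we have $-\frac{1+\bar\alpha}{\rho\sqrt z} z^{n/2} U_{n-1}(w)$, and since $U_{-1}=0$, the sum over $n\ge 0$ of $z^{n/2} U_{n-1}(w) t^n$ equals $\sqrt z\, t \sum_{n\ge 1} z^{(n-1)/2} U_{n-1}(w) t^{n-1} = \sqrt z\, t \cdot \frac{\rho}{\rho - (z+1)t + \rho z t^2}$ by the previous paragraph (reindexing $m = n-1$). Multiplying by the prefactor $-\frac{1+\bar\alpha}{\rho\sqrt z}$ cancels the $\sqrt z$ and one power of $\rho$, giving $-\frac{(1+\bar\alpha)\, t}{\rho - (z+1)t + \rho z t^2}$. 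Adding the two contributions produces $\frac{\rho - t - t\bar\alpha}{\rho - (z+1)t + \rho z t^2}$, as desired.

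For rigor I would note the convergence caveat already present in the statement: the manipulations are valid as formal power series in $t$, and wherever the geometric/Chebyshev series converge (a neighbourhood of $t=0$ depending on $z$) the interchange of summation is justified by absolute convergence, so the identity holds as an identity of analytic functions there. I do not expect a real obstacle here; the only thing to be careful about is the bookkeeping of the half-integer powers of $z$ and the choice of branch of $\sqrt z$ — but since $U_n(w)$ is a polynomial in $w$ of degree $n$, every occurrence of $\sqrt z$ combines into an integer power, so the final expression (and each partial sum $\varphi_n$) is genuinely a polynomial in $z$ with no branch ambiguity. If one prefers, the whole computation can be phrased purely in terms of $s=\sqrt z\, t$ and then rewritten, which sidesteps any discomfort with $\sqrt z$ entirely.
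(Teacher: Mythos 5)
Your proof is correct and is essentially the paper's argument: the paper also derives the identity directly from Theorem \ref{geronclosed} together with the Chebyshev generating function $\sum_{n\ge0}U_n(x)t^n=(1-2xt+t^2)^{-1}$, leaving the substitution $s=\sqrt{z}\,t$ and the reindexing of the $U_{n-1}$ term as routine. Your write-up simply makes those bookkeeping steps (and the harmlessness of the $\sqrt{z}$ branch) explicit.
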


\begin{proof}
This is an immediate consequence of Theorem \ref{geronclosed} and the fact that
\[
\sum_{n=0}^{\infty}U_n(x)t^n=\frac{1}{1-2xt+t^2}
\]
whenever this series converges (see \cite[Equation 4.5.23]{Ibook}).
\end{proof}

As a second application, we can use Theorem \ref{geronclosed} to find convenient formulas for the Wall polynomials for the measure $\mu_{\alpha}$.

\begin{corollary}\label{wall}
For all $n\in\bbN$, the Wall polynomials $A_n$ and $B_n$ for the measure $\mu_{\alpha}$ are given by
\begin{align*}
A_n(z)&=\alpha\rho^{n}z^{n/2} U_n\left(\frac{z+1}{2\rho\sqrt{z}}\right)\\
B_n(z)&=\rho^{n+1}z^{(n+1)/2} \left[U_{n+1}\left(\frac{z+1}{2\rho\sqrt{z}}\right)-\frac{\sqrt{z}}{\rho}U_{n}\left(\frac{z+1}{2\rho\sqrt{z}}\right)\right]
\end{align*}
\end{corollary}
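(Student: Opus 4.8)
The plan is to combine the Pint\'{e}r--Nevai formulas recorded in Section~\ref{wallsec} with the identities already extracted in the proof of Theorem~\ref{geronclosed}. Recall that that proof in fact produced the compact relations $\Phi_n^*(z;\mu_\alpha)=y_n-z(1+\alpha)y_{n-1}$ (this is exactly (\ref{phisdent})), where $y_n(z)=\rho^n z^{n/2}U_n\!\left(\frac{z+1}{2\rho\sqrt{z}}\right)$. So in particular $\Phi_{n+1}^*(z;\mu_\alpha)=y_{n+1}-z(1+\alpha)y_n$. The first step is to obtain the analogous expression for the second kind polynomials. Since $\{\Psi_n(z;\mu_\alpha)\}_{n=0}^\infty$ arises from the Szeg\H{o} recursion with the constant Verblunsky sequence $\{-\alpha,-\alpha,\ldots\}$, its transfer matrix is $\begin{pmatrix} z & \bar\alpha\\ \alpha z & 1\end{pmatrix}$, and the key observation is that this matrix has the same trace $z+1$ and the same determinant $\rho^2 z$ as the transfer matrix for $\mu_\alpha$; hence the auxiliary sequence $y_n$ produced by Theorem~\ref{wmain} is literally the same. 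Applying Theorem~\ref{wmain} to this matrix and the vector $\begin{pmatrix}1\\1\end{pmatrix}$ then gives $\Psi_n^*(z;\mu_\alpha)=y_n-z(1-\alpha)y_{n-1}$, so $\Psi_{n+1}^*(z;\mu_\alpha)=y_{n+1}-z(1-\alpha)y_n$.

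With both $\Phi_{n+1}^*$ and $\Psi_{n+1}^*$ written in terms of $y_{n+1}$ and $y_n$, the remaining work is purely algebraic. In the difference $\Psi_{n+1}^*-\Phi_{n+1}^*$ the $y_{n+1}$ terms cancel and the coefficient of $y_n$ collapses to $2\alpha z$, so the Pint\'{e}r--Nevai formula yields $A_n=\alpha y_n$; inserting the closed form of $y_n$ gives the stated expression for $A_n$. In the sum $\Psi_{n+1}^*+\Phi_{n+1}^*$ the $\alpha$-dependence in the coefficient of $y_n$ cancels, leaving $2\bigl(y_{n+1}-zy_n\bigr)$, so $B_n=y_{n+1}-zy_n$. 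Factoring $\rho^{n+1}z^{(n+1)/2}$ out of this expression and simplifying the leftover power of $z$ (using $z\cdot z^{n/2}/z^{(n+1)/2}=\sqrt{z}$) produces the claimed formula for $B_n$.

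I do not expect a genuine obstacle: the corollary is essentially a bookkeeping consequence of the matrix computation already done for Theorem~\ref{geronclosed}, and the only substantively new input is the remark that replacing $\alpha$ by $-\alpha$ leaves the trace and determinant of the transfer matrix—and therefore $y_n$—unchanged. The one point that deserves a little care is the consistency of the choice of branch of $\sqrt{z}$ across the expressions for $\Phi_n^*$, $\Psi_n^*$, and $y_n$; but since $y_n$ is a genuine polynomial in $z$ and every formula above involves only $y_n$ and $y_{n-1}$ (never an isolated half-integer power of $z$), the resulting identities for $A_n$ and $B_n$ are valid for either choice of branch, exactly as in the proof of Theorem~\ref{geronclosed}.
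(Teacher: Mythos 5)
Your proposal is correct and follows essentially the same route as the paper, which simply cites the Pint\'{e}r--Nevai formulas together with Theorem \ref{geronclosed}; your explicit derivation of $\Psi_{n+1}^*(z;\mu_\alpha)=y_{n+1}-z(1-\alpha)y_n$ via the transfer matrix with trace $z+1$ and determinant $\rho^2 z$ is just the $\alpha\mapsto-\alpha$ instance of the computation behind Theorem \ref{geronclosed} (since $\Psi_n(z;\mu_\alpha)=\Phi_n(z;\mu_{-\alpha})$ and $\rho$ is unchanged). The ensuing algebra giving $A_n=\alpha y_n$ and $B_n=y_{n+1}-zy_n$ matches the stated formulas.
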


\begin{proof}
This is an immediate consequence of Theorem \ref{geronclosed} and the Pint\'{e}r-Nevai formulas.
\end{proof}

As an additional application, we have the following relation for the first and second-kind paraorthogonal polynomials.

\begin{corollary}\label{geronone}
For every $\alpha\in\bbD$ and every $n\in\bbN$ it holds that
\[
\Phi_n^{(1)}(1;\mu_{\alpha})+\Psi_n^{(1)}(1;\mu_{\alpha})=0.
\]
\end{corollary}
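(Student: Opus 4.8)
The plan is to unwind the definition of the paraorthogonal polynomials and then appeal to the monic identities \eqref{phident} and \eqref{phisdent} from the proof of Theorem~\ref{geronclosed}. By definition $\Phi_n^{(1)}(z;\mu_{\alpha})=z\Phi_{n-1}(z;\mu_{\alpha})-\Phi_{n-1}^*(z;\mu_{\alpha})$ and, likewise, $\Psi_n^{(1)}(z;\mu_{\alpha})=z\Psi_{n-1}(z;\mu_{\alpha})-\Psi_{n-1}^*(z;\mu_{\alpha})$, so evaluating at $z=1$ reduces the claim to
\[
\bigl[\Phi_{n-1}(1;\mu_{\alpha})-\Phi_{n-1}^*(1;\mu_{\alpha})\bigr]+\bigl[\Psi_{n-1}(1;\mu_{\alpha})-\Psi_{n-1}^*(1;\mu_{\alpha})\bigr]=0.
\]

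The key observation is that the second-kind polynomials $\Psi_n(\cdot;\mu_{\alpha})$ are generated by the Szeg\H{o} recursion with the \emph{constant} Verblunsky sequence $\{-\alpha,-\alpha,\ldots\}$, hence $\Psi_n(z;\mu_{\alpha})=\Phi_n(z;\mu_{-\alpha})$ and $\Psi_n^*(z;\mu_{\alpha})=\Phi_n^*(z;\mu_{-\alpha})$. I would then apply \eqref{phident} and \eqref{phisdent} for the parameter $\alpha$ and for the parameter $-\alpha$; the point is that the auxiliary polynomials $y_n$ depend on $\alpha$ only through $\rho=\sqrt{1-|\alpha|^2}$, which is unchanged under $\alpha\mapsto-\alpha$, so one and the same $y_n$ occurs in all four formulas. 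Setting $z=1$ and subtracting yields $\Phi_{n-1}(1;\mu_{\alpha})-\Phi_{n-1}^*(1;\mu_{\alpha})=(\alpha-\bar{\alpha})\,y_{n-2}(1)$ and $\Psi_{n-1}(1;\mu_{\alpha})-\Psi_{n-1}^*(1;\mu_{\alpha})=(\bar{\alpha}-\alpha)\,y_{n-2}(1)$, and these two quantities are negatives of one another, giving the result.

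There is essentially no obstacle here beyond correctly identifying $\Psi_n(\cdot;\mu_{\alpha})$ with $\Phi_n(\cdot;\mu_{-\alpha})$ and keeping track of the complex conjugates in \eqref{phident}--\eqref{phisdent}; the cancellation is then immediate. The only point worth a remark is the boundary case $n=1$, where $y_{-1}=0$ (equivalently $U_{-1}=0$) forces $\Phi_0=\Phi_0^*=\Psi_0=\Psi_0^*=1$, so $\Phi_1^{(1)}(1;\mu_{\alpha})$ and $\Psi_1^{(1)}(1;\mu_{\alpha})$ each vanish, consistently with the statement.
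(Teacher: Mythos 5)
Your proof is correct, and it follows exactly the route the paper intends (the corollary is stated there without an explicit proof, as a consequence of Theorem~\ref{geronclosed}): identify $\Psi_n(\cdot;\mu_{\alpha})$ with the Geronimus polynomials for $-\alpha$, note that $y_n$ depends on $\alpha$ only through $\rho$, and cancel via \eqref{phident}--\eqref{phisdent} at $z=1$. The sign bookkeeping and the $n=1$ boundary case ($y_{-1}=0$) are handled correctly.
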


We can also use Theorem \ref{geronclosed} to provide new proofs of some existing results.  For instance, we can apply Corollary \ref{wall} and send $\nri$ to find the Schur function for the measure $\mu_{\alpha}$.    Indeed, by \cite[Theorem 1]{SimRat}, we know that
\begin{equation}\label{urat}
\lim_{\nri}\frac{U_{n+1}(x)}{U_n(x)}=x+\sqrt{x^2-1},\qquad x\not\in[-1,1].
\end{equation}
If we apply Corollary \ref{wall} and \eqref{urat} with $x=\frac{z+1}{2\rho\sqrt{z}}$ we conclude
\[
\lim_{\nri}\frac{A_n(z)}{B_n(z)}=\frac{2\alpha}{1-z+\sqrt{(z+1)^2-4\rho^2z}},\qquad\qquad |z|<1,
\]
which agrees with the formula given for $f$ in \cite[Section 1.6]{OPUC1}.  We can also use Theorem \ref{geronclosed} to deduce the ratio asymptotic behavior of the orthonormal Geronimus polynomials.  If we apply the formula from Theorem \ref{geronclosed} and \eqref{urat} with $x=\frac{z+1}{2\rho\sqrt{z}}$, we see that
\[
\lim_{\nri}\frac{\varphi_{n+1}(z;\mu_{\alpha})}{\varphi_n(z;\mu_{\alpha})}=\frac{z+1+\sqrt{(z+1)^2-4\rho^2z}}{2\rho},\qquad z\not\in\supp(\mu_{\alpha}).
\]
This result is not new and follows from the stronger results in \cite[Theorem 1]{BHMD}, but Theorem \ref{geronclosed} provides us with an easy proof.

Theorem \ref{geronclosed} also provides a new proof of the following fact, which appears in \cite[Equation 5]{Demeyer}.  To state it, we recall the polynomials $\{T_n\}_{n\in\bbN}$ from Section \ref{chebsec}.

\begin{corollary}\label{pell}
The pair $(X,Y)=(T_n(z),U_{n-1}(z))$ solves the Pell equation
\[
X^2-(z^2-1)Y^2=1.
\]
\end{corollary}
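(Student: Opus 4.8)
The plan is to deduce the identity from the explicit formulas for the Chebyshev polynomials recorded in Section~\ref{chebsec}. Introduce the shorthand $w=z+\sqrt{z^2-1}$, so that $w^{-1}=z-\sqrt{z^2-1}$ (their product is $z^2-(z^2-1)=1$); hence $z=\tfrac12(w+w^{-1})$ and $\sqrt{z^2-1}=\tfrac12(w-w^{-1})$. With this notation, formula~(\ref{chebform2}) reads
\[
U_{n-1}(z)=\frac{w^{n}-w^{-n}}{w-w^{-1}} .
\]
The first step is to establish the companion formula $T_n(z)=\tfrac12\bigl(w^{n}+w^{-n}\bigr)$. This is the one place that calls for a short computation: starting from the definition $T_n=U_n-zU_{n-1}$ in~(\ref{TU}), substitute the above expressions for $U_n$, $U_{n-1}$, and $z$, and simplify the resulting Laurent polynomial in $w$; the cross terms cancel and one is left with $\tfrac12(w^{n}+w^{-n})$. (Alternatively, one may simply invoke the standard identity $T_n(\cos\theta)=\cos n\theta$.)

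Once both formulas are in hand, the Pell identity is immediate. Since $\sqrt{z^2-1}=\tfrac12(w-w^{-1})$, we get
\[
T_n(z)+\sqrt{z^2-1}\,U_{n-1}(z)=\frac{w^{n}+w^{-n}}{2}+\frac{w^{n}-w^{-n}}{2}=w^{n},
\]
and likewise $T_n(z)-\sqrt{z^2-1}\,U_{n-1}(z)=w^{-n}$. Multiplying these two equalities yields
\[
T_n(z)^2-(z^2-1)\,U_{n-1}(z)^2=w^{n}\cdot w^{-n}=1 .
\]
Both sides are genuine polynomials in $z$ — the square root enters only through $z^2-1$ — so the computation, valid as an identity of rational functions in $w$ (equivalently for $z$ in any region where a branch of $\sqrt{z^2-1}$ is fixed), extends to all $z\in\bbC$. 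I do not expect any real obstacle here; the only point requiring a line of algebra is the closed form for $T_n$, and even that step can be replaced by a citation to the standard theory of Chebyshev polynomials.
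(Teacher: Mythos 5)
Your argument is correct: the closed form~(\ref{chebform2}) with $w=z+\sqrt{z^2-1}$ gives $U_{n-1}(z)=\frac{w^{n}-w^{-n}}{w-w^{-1}}$, the computation $T_n(z)=\tfrac12(w^n+w^{-n})$ via~(\ref{TU}) checks out, and factoring $T_n^2-(z^2-1)U_{n-1}^2=\bigl(T_n+\sqrt{z^2-1}\,U_{n-1}\bigr)\bigl(T_n-\sqrt{z^2-1}\,U_{n-1}\bigr)=w^n\cdot w^{-n}$ settles the identity, with the branch issue correctly dismissed since both sides are polynomials in $z$. However, this is a genuinely different route from the one taken in the paper: it is the classical, self-contained Chebyshev argument, needing nothing beyond Section~\ref{chebsec}. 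The paper instead derives the Pell equation as a consequence of Theorem~\ref{geronclosed} together with the OPUC identity $\Psi_n^*(z;\mu)\Phi_n(z;\mu)+\Phi_n^*(z;\mu)\Psi_n(z;\mu)=2z^n\prod_{j=0}^{n-1}(1-|\alpha_j|^2)$ applied to the Geronimus measure $\mu_\alpha$: substituting the new closed forms yields $U_n(x)^2+U_{n-1}(x)^2-2xU_nU_{n-1}(x)=1$ with $x=\frac{z+1}{2\rho\sqrt z}$, and then~(\ref{TU}) converts this to the Pell form. Your proof is shorter and more elementary; what the paper's proof buys is precisely the point of Corollary~\ref{pell} in context, namely a demonstration that the new formula for the Geronimus polynomials, combined with standard OPUC structure (the first/second kind ``Wronskian'' relation), reproduces this known Diophantine identity --- so as a replacement it proves the statement but loses the illustrative purpose for which the corollary was included.
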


\begin{proof}
We recall \cite[Proposition 3.2.2]{OPUC1}, which tells us that for any measure $\mu$ with Verblunsky coefficients $\{\alpha_j\}_{j=0}^{\infty}$ it holds that
\[
\Psi_n^*(z;\mu)\Phi_n(z;\mu)+\Phi_n^*(z;\mu)\Psi_n(z;\mu)=2z^n\prod_{j=0}^{n-1}(1-|\alpha_j|^2)
\]
Applying this formula with $\mu=\mu_{\alpha}$, we find 
\[
U_n\left(\frac{z+1}{2\rho\sqrt{z}}\right)^2+U_{n-1}\left(\frac{z+1}{2\rho\sqrt{z}}\right)^2-\frac{z+1}{\rho\sqrt{z}}U_n\left(\frac{z+1}{2\rho\sqrt{z}}\right)U_{n-1}\left(\frac{z+1}{2\rho\sqrt{z}}\right)=1.
\]
Therefore, by invoking \eqref{TU} we find that for any $w\in\bbC$ it holds that
\begin{align*}
T_n(w)^2+U_{n-1}(w)^2&=(U_n(w)-wU_{n-1}(w))^2+U_{n-1}(w)^2=1+w^2U_{n-1}(w)^2
\end{align*}
as desired.
\end{proof}

One can also use Theorem \ref{geronclosed} to prove more substantial new results that require more detailed calculation and analysis.  The next section is devoted to just such a result, namely a universality result at the endpoint of the arc supporting the measure of orthogonality.

\section{Universality}\label{univ}

Let $\mu$ be a probability measure with infinite support on the unit circle.  The degree $n$ polynomial reproducing kernel $K_n(z,w;\mu)$ is given by
\[
K_n(z,w;\mu):=\sum_{m=0}^n\varphi_m(z;\mu)\overline{\varphi_m(w;\mu)}
\]
and is the reproducing kernel for the space of polynomials of degree at most $n$ in $L^2(\mu)$.  One is often interested in calculating the following limit (if it exists):
\begin{align}\label{klim}
\lim_{\nri}\frac{K_n(z_0+\sigma_1(n),z_0+\sigma_2(n);\mu)}{K_n(z_0,z_0;\mu)},
\end{align}
where $\sigma_j(n)\rightarrow0$ as $\nri$ in a specific way for $j=1,2$.   If this limit exists and is the same for a large class of measures $\mu$, then we call the corresponding result a \textit{universality} result.  Some universality results when the point $z_0$ is the endpoint of an interval supporting the measure of orthogonality can be found in \cite{Danka,KV,LubEdge,LubEdge2}, but all of these results assume that the measure is supported on a compact subset of the real line.  Our main result in this section is Theorem \ref{uend}, which considers measures supported on an arc of the unit circle.   Before we can state it, we need to define some notation.  If $J_{s}$ deontes the Bessel function of the first kind of order $s$, then we set
\[
\bbJ_{1/2}^*(a,b):=\begin{cases}
\frac{J_{1/2}(\sqrt{a})\sqrt{b}J_{1/2}'(\sqrt{b})-J_{1/2}(\sqrt{b})\sqrt{a}J_{1/2}'(\sqrt{a})}{2a^{1/4}b^{1/4}(a-b)},\qquad & a\neq b\\
\frac{1}{4\sqrt{a}}\left(J_{1/2}^2(\sqrt{a})-J_{3/2}(\sqrt{a})J_{-1/2}(\sqrt{a})\right) & a=b
\end{cases}
\]
as in \cite{Danka,LubEdge,LubEdge2}.  As noted in \cite{LubEdge}, the function $\bbJ_{1/2}^*$ is entire.  Now we can state our main result about universality after recalling the notion of regularity from Section \ref{reg}.  For the remainder of this section, we identify the unit circle with the interval $[0,2\pi)$.

\begin{theorem}\label{uend}
Fix $\alpha\in(-1,0)$ and let $\mu$ be a probability measure on the unit circle of the form $h(\theta)w(\theta)\frac{d\theta}{2\pi}+d\tilde{\mu}$ where
\[
\supp(\tilde{\mu})\subseteq[2\arcsin(|\alpha|)+\tilde{\epsilon},2\pi-2\arcsin(|\alpha|)]
\]
for some $\tilde{\epsilon}>0$ and
\[
w(\theta)=\begin{cases}
\frac{\sqrt{1-\alpha^2-\cos^2(\theta/2)}}{(1+\alpha)\sin(\theta/2)}\qquad\qquad & 2\arcsin(|\alpha|)<\theta<2\pi-2\arcsin(|\alpha|)\\
0 & o.w.
\end{cases}
\]
where $h(\theta)$ is continuous at $2\arcsin(|\alpha|)$ and $h(2\arcsin(|\alpha|))>0$.  Assume also that $\mu$ is regular.  Then uniformly for $a,b$ in compact subsets of the complex plane, it holds that
\[
\lim_{\nri}\frac{K_n(e^{i(\tha-\frac{a}{n^2})},e^{i(\tha-\frac{b}{n^2})};\mu)}{K_n(e^{i\tha},e^{i\tha};\mu)}=\frac{\bbJ_{1/2}^*(\frac{\alpha a}{\rho},\frac{\alpha\bar{b}}{\rho})}{\bbJ_{1/2}^*(0,0)},
\]
where $\tha=2\arcsin(|\alpha|)$ and $\rho=\sqrt{1-\alpha^2}$.
\end{theorem}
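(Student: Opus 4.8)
The plan is to prove the statement first for the reference measure $\mu_\alpha$ itself and then transfer it to an arbitrary $\mu$ in the stated class. For $\alpha\in(-1,0)$ one checks (e.g.\ from the explicit Schur function, which takes the value $-1$ at $z=1$, so the associated Carath\'eodory function is finite there) that $\mu_\alpha$ has no mass point, so $\mu_\alpha=w(\theta)\frac{d\theta}{2\pi}$ belongs to the class of the theorem with $h\equiv1$, $\tilde\mu=0$; moreover $\mu_\alpha$ is regular, since its orthonormal polynomials have leading coefficient $\rho^{-n}$ while $\mathrm{cap}(\supp(\mu_\alpha))=\rho$.

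For $\mu=\mu_\alpha$ I would start from the Christoffel--Darboux formula
\[
K_n(z,w;\mu_\alpha)=\frac{\varphi_{n+1}^*(z;\mu_\alpha)\overline{\varphi_{n+1}^*(w;\mu_\alpha)}-\varphi_{n+1}(z;\mu_\alpha)\overline{\varphi_{n+1}(w;\mu_\alpha)}}{1-z\bar w}
\]
(see \cite[Theorem 2.2.7]{OPUC1}) and substitute the expressions from Theorem \ref{geronclosed}. With $z=e^{i(\theta_\alpha-a/n^2)}$ and $w=e^{i(\theta_\alpha-b/n^2)}$ the common argument of the Chebyshev polynomials is $\frac{z+1}{2\rho\sqrt z}=\frac{\cos((\theta_\alpha-a/n^2)/2)}{\rho}=1+\frac{|\alpha|a}{2\rho n^2}+O(n^{-4})$, i.e.\ it approaches the endpoint $1$ of $[-1,1]$. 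Writing $U_m(x)=\frac{W^{m+1}-W^{-(m+1)}}{W-W^{-1}}$ from \eqref{chebform2}, with $W=x+\sqrt{x^2-1}=1+\frac{c_a}{n}+O(n^{-2})$ and $c_a:=\sqrt{-\alpha a/\rho}$, one gets $U_m(x)=\frac{n}{c_a}\sinh c_a\,(1+o(1))$ for $m\in\{n-1,n,n+1\}$ uniformly for $a$ in compacts, together with an analogous asymptotic for the Chebyshev ``Wronskian'' $U_{n+1}(x_z)U_n(x_w)-U_n(x_z)U_{n+1}(x_w)$, which turns out to be of exact order $n$. Feeding these into the Christoffel--Darboux numerator, using $1-z\bar w=-i(b-a)/n^2+O(n^{-4})$ and $\sin((\theta_\alpha-a/n^2)/2)\to|\alpha|$, and invoking the identity
\[
(c_a-c_b)\sinh(c_a+c_b)-(c_a+c_b)\sinh(c_a-c_b)=2\pi c_ac_b(c_a^2-c_b^2)\,\bbJ_{1/2}^*(-c_a^2,-c_b^2),
\]
which follows from $J_{\pm1/2}(u)=\sqrt{2/(\pi u)}\,\{\sin u,\cos u\}$, one obtains
\[
K_n\big(e^{i(\theta_\alpha-a/n^2)},e^{i(\theta_\alpha-b/n^2)};\mu_\alpha\big)=\frac{2\pi(1+\alpha)\alpha^2}{\rho^2}\,n^3\,\bbJ_{1/2}^*\Big(\tfrac{\alpha a}{\rho},\tfrac{\alpha\bar b}{\rho}\Big)(1+o(1))
\]
uniformly on compact subsets of $\bbC^2$ (the conjugate on $b$ entering because $\overline{\varphi_{n+1}(w)}$ is continued in the variable $\bar b$, and the case $a=b$ handled by the confluent form of the Christoffel--Darboux formula, $\bbJ_{1/2}^*$ being entire). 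Taking $a=b=0$ recovers $K_n(e^{i\theta_\alpha},e^{i\theta_\alpha};\mu_\alpha)\sim\frac{2\alpha^2}{3(1-\alpha)}n^3$, and dividing proves the theorem for $\mu=\mu_\alpha$.

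To pass to a general $\mu$ in the class I would show that, at the scale $n^{-2}$ near $e^{i\theta_\alpha}$, the kernel $K_n(\cdot,\cdot;\mu)$ has the same asymptotics as $K_n(\cdot,\cdot;\mu_\alpha)$ up to the overall factor $h(\theta_\alpha)^{-1}$, which cancels in the ratio. This uses two comparison ingredients, following the circle of ideas in \cite{LubEdge,LubEdge2,Danka} but with $\mu_\alpha$ playing the role of the explicitly understood reference measure: (i) a Christoffel-function comparison at the endpoint, $\lim_{\nri}K_n(e^{i\theta_\alpha},e^{i\theta_\alpha};\mu_\alpha)/K_n(e^{i\theta_\alpha},e^{i\theta_\alpha};\mu)=h(\theta_\alpha)$, which one reads off from the variational formula $1/K_n(\zeta,\zeta;\cdot)=\min\{\int|P|^2\,d(\cdot):\deg P\le n,\ P(\zeta)=1\}$, the continuity and positivity of $h$ at $\theta_\alpha$, the separation of $\supp(\tilde\mu)$ from $\theta_\alpha$, and the regularity of both measures (via \eqref{regprop}), which forces near-extremal polynomials to carry negligible $L^2$-mass away from $e^{i\theta_\alpha}$; and (ii) a normal-families argument in the spirit of \cite{LubEdge,Danka}, which upgrades the diagonal comparison to the full off-diagonal limit, using that $\bbJ_{1/2}^*$ is the reproducing kernel of a de Branges space. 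Combining (i), (ii) and the computation for $\mu_\alpha$ then yields the theorem.

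The main obstacle is ingredient (i): unlike in the bulk, the relevant polynomials live at scale $n^{-2}$ and $K_n(e^{i\theta_\alpha},e^{i\theta_\alpha})$ grows like $n^3$, so a crude sup-norm bound via regularity is far too lossy to show that a near-extremal polynomial deposits only $o(n^{-3})$ of its mass outside a fixed neighborhood of $e^{i\theta_\alpha}$; one must instead use the Christoffel--Darboux form and the off-diagonal decay of $K_n(\cdot,e^{i\theta_\alpha};\mu_\alpha)$ coming out of the $\mu_\alpha$ analysis, and, to kill possible mass of $\tilde\mu$ near the far endpoint $e^{i(2\pi-\theta_\alpha)}$, multiply the test polynomial by an auxiliary factor of degree $o(n)$ that is near $1$ at $e^{i\theta_\alpha}$ and small on $\supp(\tilde\mu)$. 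A secondary technical point is that the leading terms in the Christoffel--Darboux numerator for $\mu_\alpha$ cancel, so the asymptotics of the Chebyshev factors in the second step must be tracked one order past the naive leading behavior, and uniformly for $a,b$ in compact subsets of $\bbC$.
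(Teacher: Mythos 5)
Your proposal is correct in outline and its numerology checks out (your constant $\frac{2\pi(1+\alpha)\alpha^2}{\rho^2}n^3\bbJ_{1/2}^*$ and the diagonal value $\frac{2\alpha^2}{3(1-\alpha)}n^3$ agree exactly with what the paper obtains, since $\bbJ_{1/2}^*(0,0)=\frac{1}{3\pi}$), but both halves differ genuinely from the paper's route. For the model case, the paper does not expand the Chebyshev polynomials by hand: after the CD formula it recognizes the cancelling pair of products as $2(x_z-x_w)K_n(x_z,x_w;\mu^*)$, the CD kernel of the second-kind Chebyshev measure at arguments $1+O(n^{-2})$, and then simply quotes the real-line endpoint universality of \cite[Theorem 1.4]{Danka} together with $K_n(1,1;\mu^*)=\frac16(n+1)(n+2)(2n+3)$; your direct route via \eqref{chebform2}, $W=1+c_a/n+O(n^{-2})$ and the $\sinh$-identity for $\bbJ_{1/2}^*$ is more self-contained but must track the Wronskian cancellation one order deeper, as you note. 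For the general case, the paper replaces your Lubinsky--Danka localization scheme by citing Bourgade's comparison theorem (\cite[Theorem 3.10]{Bourgade}, stated for OPUC), whose hypothesis \eqref{rlimsup} follows from the model computation, together with a Christoffel-function lemma (Lemma \ref{lambdarat}, proved as in \cite[Lemma 2.8]{SimUniv} via M\'at\'e--Nevai--Totik ideas); this completely bypasses the de Branges-space machinery.

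Two caveats about your general-case plan. First, your ingredient (i) is stated only at the fixed point $e^{i\tha}$; the de Branges/normal-families upgrade in \cite{LubEdge2,Danka} needs the diagonal ratio asymptotics at all scaled points $e^{i(\tha-a/n^2)}$, $a$ real, uniformly on compacts (this is exactly what Lemma \ref{lambdarat} provides), so you must run the variational/MNT argument at moving points, not just at the endpoint. Second, the machinery you invoke in (ii) (ordering of de Branges spaces, uniqueness of the space with reproducing kernel $\bbJ_{1/2}^*$) is formulated for measures on the real line; transferring it to an arc of the circle is not automatic and is precisely the work that Bourgade's theorem spares the paper -- you would need either to carry out that transfer (e.g.\ via a Szeg\H{o}-type mapping, keeping track of the $n^{-2}$ endpoint scaling) or to substitute a comparison theorem valid for OPUC. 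Neither point is fatal, but as written they are the places where your argument is not yet a proof.
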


\noindent\textit{Remark.}  Notice that the limiting kernel in Theorem \ref{uend} is the same as in the real line case from \cite[Theorem 1.4]{Danka}.

\smallskip

Before we proceed with the proof of Theorem \ref{uend}, we present a proof of the following fact about the kernel $\bbJ^*_{1/2}$.

\begin{prop}\label{jz}
The function $\bbJ^*_{1/2}(t,\bar{t})$ is non-vanishing as a function of $t\in\bbC$.
\end{prop}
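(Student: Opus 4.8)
The plan is to reduce the non-vanishing of $\bbJ^*_{1/2}(t,\bar t)$ to an explicit elementary computation, using the fact that the Bessel functions of half-integer order are elementary. Recall that $J_{1/2}(x)=\sqrt{\frac{2}{\pi x}}\sin x$ and $J_{-1/2}(x)=\sqrt{\frac{2}{\pi x}}\cos x$, and $J_{3/2}(x)=\sqrt{\frac{2}{\pi x}}\left(\frac{\sin x}{x}-\cos x\right)$. First I would substitute $a=b=t$ is the wrong case; rather, since the argument is $(t,\bar t)$ with $t$ complex, I must use the $a\neq b$ branch of the definition except on the real axis where $t=\bar t$. So I would split into two cases: $t\notin\bbR$ (use the first branch with $a=t$, $b=\bar t$) and $t\in\bbR$ (use the second branch with $a=t$).

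For the case $t\in\bbR$, plugging the half-integer formulas into $\frac{1}{4\sqrt t}\left(J_{1/2}^2(\sqrt t)-J_{3/2}(\sqrt t)J_{-1/2}(\sqrt t)\right)$ and simplifying, one finds (after using $\sqrt t\cdot\sqrt t=t$ and collecting the $\frac{2}{\pi\sqrt t}$ prefactors) an expression of the form $\frac{c}{t^{3/2}}\left(\sin^2\sqrt t-\cos\sqrt t\left(\frac{\sin\sqrt t}{\sqrt t}-\cos\sqrt t\right)\sqrt t\right)$; the bracket collapses to $\sin^2\sqrt t+\cos^2\sqrt t-\sqrt t\sin\sqrt t\cos\sqrt t = 1-\frac{\sqrt t}{2}\sin(2\sqrt t)$, wait — I should be careful with the $\sqrt t$ algebra, but the upshot is that $\bbJ^*_{1/2}(t,t)$ for real $t$ is a nonzero constant times $1-\frac{\sin(2\sqrt t)}{2\sqrt t}$, which vanishes only if $\sin(2\sqrt t)=2\sqrt t$, and $\sin x = x$ forces $x=0$, i.e. $t=0$; but at $t=0$ one checks by the entire extension (or directly via the series $1-\frac{\sin x}{x}=\frac{x^2}{6}-\cdots$) that the value is the nonzero number $\bbJ^*_{1/2}(0,0)$, which also handles $t=0$. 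For $t\notin\bbR$, I would carry out the analogous substitution in the $a\neq b$ branch; writing $u=\sqrt t$, $v=\sqrt{\bar t}=\bar u$, the numerator becomes (up to a nonzero entire prefactor coming from the $\frac{2}{\pi}$-type constants and fractional powers) a combination of $\sin u\cos v$ and $\sin v\cos u$ divided by $u^2-v^2$, and trig identities collapse it to something proportional to $\frac{\sin(u-v)}{u-v}$ evaluated with $u-v$ purely imaginary, i.e. proportional to $\frac{\sinh s}{s}$ with $s$ real and nonzero, which never vanishes.

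The cleanest route may actually be to avoid the case split entirely: observe that $\bbJ^*_{1/2}$ is, by the remark in \cite{LubEdge}, an entire function of its two arguments, and that $\bbJ^*_{1/2}(t,t)$ (the restriction of $\bbJ^*_{1/2}(a,b)$ to the diagonal) has a closed form which, via the Christoffel--Darboux-type structure behind its definition, is known to equal a positive multiple of $\int_0^1 |\text{(reproducing kernel density)}|^2$ — concretely one expects $\bbJ^*_{1/2}(t,t) = C\int_0^1 J_{1/2}(\sqrt t\, x)^2\,x\,dx$ up to normalization; this is a manifestly positive quantity on the positive reals and, being entire, extends, but positivity alone does not give non-vanishing off the reals. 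So I would fall back on the explicit elementary evaluation described above, since for half-integer Bessel order everything is genuinely elementary and the resulting functions ($1-\frac{\sin x}{x}$ on the diagonal, $\frac{\sinh s}{s}$ type off-diagonal) are classical and have no nonzero real zeros.

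The main obstacle I anticipate is purely bookkeeping: keeping track of the fractional powers $a^{1/4}b^{1/4}$, the branch of $\sqrt a$, and the $\sqrt{2/\pi}$ constants so that one correctly identifies the nonzero prefactor and isolates the genuinely variable elementary factor; there is a real danger of a spurious apparent zero introduced by a mishandled $\sqrt{\cdot}$. Once the algebra is organized — best done by setting $u=\sqrt a$, $v=\sqrt b$ at the outset and rewriting $\bbJ^*_{1/2}$ as a ratio of entire functions of $u,v$ symmetric under $u\mapsto -u$ — the non-vanishing on $\{v=\bar u\}$ reduces to the statements that $x\mapsto \sin x - x$ vanishes only at $x=0$ and that $\frac{\sinh s}{s}>0$ for all real $s$, both elementary.
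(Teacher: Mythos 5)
Your overall strategy is the same as the paper's (exploit the elementary closed forms of the half-integer Bessel functions and split into $t\in\bbR$ versus $t\notin\bbR$), and your diagonal case is essentially the paper's computation, which yields $\bbJ_{1/2}^*(t,t)=\frac{1}{2\pi t}\bigl(1-\frac{\sin(2\sqrt t)}{2\sqrt t}\bigr)$. One small patch is needed there: for $t<0$ the quantity $2\sqrt t$ is purely imaginary, so the relevant fact is $\sinh y=y\Rightarrow y=0$ (indeed $\sinh y/y>1$), not ``$\sin x=x$ forces $x=0$'' for real $x$; your treatment of $t=0$ via the entire extension is fine.

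The genuine gap is in the off-diagonal case: the claimed collapse to ``something proportional to $\frac{\sin(u-v)}{u-v}$'' is false. Because of the weights carried by $\sqrt{b}\,J_{1/2}'(\sqrt b)$ and the factor $a^{1/4}b^{1/4}$ in the denominator, the numerator is $\frac{\sin u\cos v}{u}-\frac{\sin v\cos u}{v}$ (not $\sin u\cos v-\sin v\cos u$), and the product-to-sum identities give
\[
\bbJ_{1/2}^*(a,b)=\frac{1}{2\pi uv}\left[\frac{\sin(u-v)}{u-v}-\frac{\sin(u+v)}{u+v}\right],\qquad u=\sqrt a,\ v=\sqrt b,
\]
so that on $v=\bar u$ with $t\notin\bbR$ you get $\frac{1}{2\pi|u|^2}\bigl[\frac{\sinh(2\Imag u)}{2\Imag u}-\frac{\sin(2\Real u)}{2\Real u}\bigr]$: a \emph{difference} of two terms, for which ``$\sinh s/s$ never vanishes'' is not a sufficient reason. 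The conclusion does follow, but from the inequality $\frac{\sinh s}{s}>1\ge\frac{\sin x}{x}$ for real $s\neq0$ and real $x$, with equality in the second only at $x=0$. With that replacement your argument closes, and it is in fact a tidy alternative to the paper's proof, which instead writes the numerator (setting $\sqrt t=\frac12(x+iy)$) as $\frac{x\sinh y-y\sin x}{x^2+y^2}$ and argues its sign quadrant by quadrant. This is exactly the bookkeeping hazard you flagged yourself, except that here the mishandled $\sqrt{\cdot}$ weights make the non-vanishing look easier than it is rather than producing a spurious zero.
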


\begin{proof}
When $t$ is real, we use the fact that
\[
J_{1/2}(z)=\frac{\sqrt{2}\sin(z)}{\sqrt{\pi z}},\qquad J_{-1/2}(z)=\frac{\sqrt{2}\cos(z)}{\sqrt{\pi z}},\qquad J_{3/2}(z)=\frac{\sqrt{2}(\sin(z)-z\cos(z))}{\sqrt{\pi z^3}}
\]
(see \cite[pages 16 $\&$ 17]{Korenev}) to see that
\[
\bbJ_{1/2}^*(t,t)=\frac{1}{2\pi t}\left(1-\frac{\sin(2\sqrt{t})}{2\sqrt{t}}\right),\qquad\qquad t\in\bbR,
\]
which is non-zero for all $t\in\bbR$.  Using similar formulas, we find that when $t\not\in\bbR$, we have
\begin{equation}\label{jt}
\bbJ_{1/2}^*(t,\bar{t})=\frac{\frac{\cos(\sqrt{\bar{t}})\sin(\sqrt{t})}{\sqrt{t}}-\frac{\cos(\sqrt{t})\sin(\sqrt{\bar{t}})}{\sqrt{\bar{t}}}}{\pi(t-\bar{t})}=\frac{\Imag\frac{\cos(\sqrt{\bar{t}})\sin(\sqrt{t})}{\sqrt{t}}}{\pi\Imag t}.
\end{equation}
Since we are assuming $t\not\in\bbR$, we may assume $\Real[\sqrt{t}]\neq0$.  We will show that \eqref{jt} is never zero when $\Real[\sqrt{t}]>0,\Imag[\sqrt{t}]>0$ and the other cases can be deduced by using the symmetry of this expression.

Suppose $\sqrt{t}=\frac{1}{2}(x+iy)$ and $\sqrt{\bar{t}}=\frac{1}{2}(x-iy)$ (our choice of $\sqrt{\bar{t}}$ does not matter because the cosine function and the sinc function are both even).  Using basic trigonometric identities, we can rewrite the numerator of \eqref{jt} as
\begin{equation}\label{jt2}
\Imag\left[\frac{\sin(x)+i\sinh(y)}{x+iy}\right]=\frac{x\sinh(y)-y\sin(x)}{x^2+y^2}
\end{equation}
This is zero when $y=0$ and the first partial derivatives of both the numerator and denominator are positive when $x$ is positive.  This shows that \eqref{jt2} is positive when $x$ and $y$ are positive.  Similar calculations for negative values of $x$ or $y$ show \eqref{jt} is non-zero when $t\not\in\bbR$.  
\end{proof}

The proof of Theorem \ref{uend} will follow the method pioneered by Lubinsky, which consists of first proving the result in one particular case (when $h\equiv1$) and then using localization techniques and the assumed regularity of the measure to prove the more general case (see \cite{LubEdge2}).

\subsection{A Model Case}\label{model}

Fix $\alpha\in(-1,0)$ and define $\tha=2\arcsin(|\alpha|)$.  The measure $\mu_{\alpha}$ from Section \ref{geronsec} is of the form given in Theorem \ref{uend} with $h\equiv1$ (see \cite[Section 1.6]{OPUC1}). Let us write $z=e^{iw}$, where we allow $w$ to be complex.  Since $U_n$ is even or odd (depending on the parity of $n$) our choice of $\sqrt{z}$ will not effect our calculations, so we will write $\sqrt{z}=e^{iw/2}$.  Theorem \ref{geronclosed} then gives
\begin{align}
\label{w}\varphi_n(z;\mu_{\alpha})&=e^{inw/2}\left(U_n\left(\frac{\cos(w/2)}{\rho}\right)-\frac{1+\bar{\alpha}}{\rho e^{iw/2}}U_{n-1}\left(\frac{\cos(w/2)}{\rho}\right)\right)\\
\label{starw}\varphi_n^*(z;\mu_{\alpha})&=e^{inw/2}\left(U_n\left(\frac{\cos(w/2)}{\rho}\right)-\frac{e^{iw/2}(1+\alpha)}{\rho}U_{n-1}\left(\frac{\cos(w/2)}{\rho}\right)\right)
\end{align}
We will apply (\ref{w}) and (\ref{starw}) with $w=\tha+t/n^2$ for various values of $t$.  We begin with the following lemma.

\begin{lemma}\label{knormal}
The collection of functions
\[
\left\{\frac{K_n(e^{i(\tha-\frac{a}{n^2})},e^{i(\tha-\frac{\bar{b}}{n^2})};\mu_{\alpha})}{n^3}\right\}_{n\in\bbN}
\]
is a normal family on $\bbC^2$ in the variables $a$ and $b$.
\end{lemma}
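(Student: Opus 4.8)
The plan is to show that the family $\{K_n(e^{i(\tha-a/n^2)},e^{i(\tha-\bar b/n^2)};\mu_{\alpha})/n^3\}_{n}$ is locally uniformly bounded on $\bbC^2$; since each member is entire in $(a,b)$ (a finite sum of products of entire functions of $a$ and $\bar b$, using that $\varphi_m$ and $\varphi_m^*$ are polynomials evaluated at analytic arguments of $a$), Montel's theorem then gives normality. The first step is to use the Christoffel--Darboux formula to write
\[
K_n(z,w;\mu_{\alpha})=\frac{\varphi_{n+1}^*(z)\overline{\varphi_{n+1}^*(w)}-\varphi_{n+1}(z)\overline{\varphi_{n+1}(w)}}{1-z\bar w},
\]
so that it suffices to bound $\varphi_{n+1}$ and $\varphi_{n+1}^*$ at the points $z=e^{i(\tha-a/n^2)}$ (and the analogous bound for $w$), together with a lower bound on $|1-z\bar w|$. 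The second step is the core estimate: plug $w=\tha-a/n^2$ into the formulas \eqref{w}--\eqref{starw} and analyze $U_n(\cos(w/2)/\rho)$ and $U_{n-1}(\cos(w/2)/\rho)$. Since $\cos(\tha/2)/\rho=1$ (because $\sin(\tha/2)=|\alpha|$, so $\cos^2(\tha/2)=1-\alpha^2=\rho^2$), we have the argument $\cos(w/2)/\rho = 1 - c a/n^2 + O(1/n^4)$ for an explicit constant $c>0$; this is exactly the regime where $U_n$ at an endpoint-type argument grows like $n$ (indeed $U_n(1)=n+1$), and more precisely $U_n(1-\xi/n^2)$ converges, after dividing by $n$, to a Bessel-type function of $\xi$. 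So I expect $\varphi_{n+1}(e^{i(\tha-a/n^2)})=O(n)$ locally uniformly in $a$, giving $K_n/n^3 = O(1)$ once the $1/(1-z\bar w)$ factor is controlled.

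For that estimate I would use the closed form \eqref{chebform2}: with $x=\cos(w/2)/\rho$, set $\zeta=x+\sqrt{x^2-1}$, so $U_n(x)=(\zeta^{n+1}-\zeta^{-n-1})/(2\sqrt{x^2-1})$. When $w=\tha-a/n^2$ one computes $x^2-1 = O(1/n^2)$ and $\sqrt{x^2-1}=i\sqrt{c a}/n + O(1/n^3)$, hence $\zeta = 1 + i\sqrt{ca}/n + O(1/n^2)$ and $\zeta^{n+1}\to e^{i\sqrt{ca}}$; substituting shows $U_n(x)/n$ converges to $\frac{\sin(\sqrt{ca})}{\sqrt{ca}}$ (up to the precise constant), locally uniformly in $a$, and in particular stays bounded. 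The same expansion handles $U_{n-1}$ and the correction terms $\frac{1+\bar\alpha}{\rho e^{iw/2}}U_{n-1}$ in \eqref{w} (note $1+\bar\alpha=1+\alpha>0$ is a fixed nonzero constant here). Feeding these into Christoffel--Darboux, the numerator is $O(n^2)$ and the denominator $1-z\bar w$ with $z=e^{i(\tha-a/n^2)}$, $w=e^{i(\tha-\bar b/n^2)}$ is $1-e^{i(\bar b-a)/n^2} = -i(\bar b-a)/n^2 + O(1/n^4)$, which is $\Theta(1/n^2)$ when $a\neq \bar b$; the removable singularity at $a=\bar b$ is handled by the entirety already noted (or by l'Hôpital). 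Altogether $K_n/n^3 = O(1)$ locally uniformly.

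The main obstacle I anticipate is purely bookkeeping rather than conceptual: keeping the asymptotic expansions of $x$, $\sqrt{x^2-1}$, and $\zeta^{\pm(n+1)}$ uniform for $a,b$ in a fixed compact set, including tracking that the error terms are genuinely $O(n^{-1})$ relative to the main term and do not blow up near the zeros of the limiting Bessel-type function (they do not, because we only need an upper bound for normality, not a nonvanishing statement). One should also be a little careful that $\cos(w/2)/\rho$ can have $x^2-1$ of either sign/complex phase as $a$ ranges over $\bbC$, so the branch of $\sqrt{x^2-1}$ must be chosen consistently — but since $U_n$ is a polynomial the final answer is branch-independent, and one may simply fix $\sqrt{x^2-1}=i\sqrt{1-x^2}$ with the principal branch near $x=1$. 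No lower bounds are needed anywhere in this lemma, which keeps everything routine once the expansion is set up.
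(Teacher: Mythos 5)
Your core estimate is the right one and matches the paper's: using \eqref{chebform2} to show $U_n\bigl(\tfrac{1}{\rho}\cos(\tfrac{\tha}{2}-\tfrac{a}{2n^2})\bigr)=U_n\bigl(1+O(n^{-2})\bigr)=O(n)$ locally uniformly in $a$, hence $\varphi_n(e^{i(\tha-a/n^2)};\mu_\alpha)=O(n)$, and then Montel. However, the way you assemble this into a bound on $K_n$ has a genuine quantitative gap. From the Christoffel--Darboux formula, ``numerator $O(n^2)$, denominator $\Theta(n^{-2})$'' gives $K_n=O(n^4)$, i.e.\ $K_n/n^3=O(n)$, not $O(1)$; your stated conclusion is an arithmetic slip. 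The correct order $K_n=O(n^3)$ only comes out of the CD formula after exploiting the cancellation between the two products $\varphi_{n+1}^*(z)\overline{\varphi_{n+1}^*(w)}$ and $\varphi_{n+1}(z)\overline{\varphi_{n+1}(w)}$: the numerator vanishes on $z\bar w=1$ and is in fact of size $O\bigl(n\,|a-\bar b|\bigr)$ for bounded $a,b$, which is exactly the delicate bookkeeping the paper carries out later in the model-case computation (the rewriting of the first two terms of \eqref{threeterm} via the Chebyshev kernel). In addition, your off-diagonal bound degenerates as $a\to\bar b$, and invoking ``removable singularity'' does not by itself produce the locally uniform bound across the diagonal that Montel requires; you would need a maximum-principle or divided-difference argument there.

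The paper avoids both difficulties with a much shorter route: by the Cauchy--Schwarz inequality for the defining sum, $|K_n(z,w;\mu_\alpha)|\le K_n(z,z;\mu_\alpha)^{1/2}K_n(w,w;\mu_\alpha)^{1/2}$ for arbitrary complex $z,w$, so it suffices to bound the diagonal; then $K_n(z,z;\mu_\alpha)=\sum_{m=0}^n|\varphi_m(z;\mu_\alpha)|^2=O\bigl(\sum_{m=1}^n m^2\bigr)=O(n^3)$ at $z=e^{i(\tha-a/n^2)}$, using precisely your Chebyshev endpoint estimate (applied to all $m\le n$). If you replace your CD step by this Cauchy--Schwarz reduction, your argument becomes correct and essentially coincides with the paper's proof; if you insist on the CD route, you must prove the cancellation in the numerator, which amounts to doing the Section on the model case in advance.
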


\begin{proof}
By Montel's Theorem and the Cauchy-Schwarz inequality, it suffices to show that the collection
\[
\left\{\frac{K_n(e^{i(\tha-\frac{a}{n^2})},e^{i(\tha-\frac{a}{n^2})};\mu_{\alpha})}{n^3}\right\}_{n\in\bbN}
\]
is uniformly bounded in compact subsets of $\bbC$ (as a function of $a$).  To do so, we use \eqref{chebform2} to see that
\begin{align*}
&U_n\left(\frac{1}{\rho}\cos\left(\frac{\tha}{2}-\frac{a}{2n^2}\right)\right)=U_n\left(1-\frac{\alpha a}{2\rho n^2}+o(n^{-2})\right)=O(n)
\end{align*}
as $\nri$ and hence (\ref{w}) implies $|\varphi_n(e^{i(\tha-\frac{a}{n^2})};\mu_{\alpha})|=O(n)$ as $\nri$ uniformly for $a$ in compact subsets of $\bbC$.  It follows that
\[
K_n(e^{i(\tha-\frac{a}{n^2})},e^{i(\tha-\frac{a}{n^2})};\mu_{\alpha})=O\left(\sum_{m=1}^{n}m^2\right)=O(n^3)
\]
uniformly for $a$ in compact subsets of $\bbC$.  This is the desired conclusion.
\end{proof}

To prove Theorem \ref{uend} in the case $\mu=\mu_{\alpha}$ for $\alpha\in(-1,0)$, we apply the CD formula with $a\neq\bar{b}$ (see  \cite[Theorem 2.2.7]{OPUC1}).  Using Theorem \ref{geronclosed}, we find
\begin{align*}
&K_n(e^{i(\tha-\frac{a}{n^2})},e^{i(\tha-\frac{b}{n^2})};\mu_{\alpha})=(1-e^{i(\bar{b}-a)/n^2})^{-1}\\
&\quad\times\bigg[e^{i(\bar{b}-a)\frac{n+1}{2n^2}}\left(U_{n+1}\left(\frac{\cos\left(\frac{\tha}{2}-\frac{a}{2n^2}\right)}{\rho}\right)-\frac{e^{i\left(\frac{\tha}{2}-\frac{a}{2n^2}\right)}(1+\alpha)}{\rho }U_{n}\left(\frac{\cos\left(\frac{\tha}{2}-\frac{a}{2n^2}\right)}{\rho}\right)\right)\\
&\quad\left(U_{n+1}\left(\frac{\cos\left(\frac{\tha}{2}-\frac{\bar{b}}{2n^2}\right)}{\rho}\right)-\frac{e^{-i\left(\frac{\tha}{2}-\frac{\bar{b}}{2n^2}\right)}(1+\alpha)}{\rho }U_{n}\left(\frac{\cos\left(\frac{\tha}{2}-\frac{\bar{b}}{2n^2}\right)}{\rho}\right)\right)-\\
&\quad e^{i(\bar{b}-a)\frac{n+1}{2n^2}}\left(U_{n+1}\left(\frac{\cos\left(\frac{\tha}{2}-\frac{a}{2n^2}\right)}{\rho}\right)-\frac{(1+\alpha)}{\rho e^{i\left(\frac{\tha}{2}-\frac{a}{2n^2}\right)}}U_{n}\left(\frac{\cos\left(\frac{\tha}{2}-\frac{a}{2n^2}\right)}{\rho}\right)\right)\\
&\quad\left(U_{n+1}\left(\frac{\cos\left(\frac{\tha}{2}-\frac{\bar{b}}{2n^2}\right)}{\rho}\right)-\frac{(1+\alpha)}{\rho e^{-i\left(\frac{\tha}{2}-\frac{\bar{b}}{2n^2}\right)}}U_{n}\left(\frac{\cos\left(\frac{\tha}{2}-\frac{\bar{b}}{2n^2}\right)}{\rho}\right)\right)\bigg]
\end{align*}
\begin{align}
\nonumber&=\frac{2ie^{i(\bar{b}-a)\frac{n+1}{2n^2}}(1+\alpha)}{\rho(1-e^{i(\bar{b}-a)/n^2})}\times\\
\nonumber&\quad\bigg[-U_{n+1}\left(\frac{\cos\left(\frac{\tha}{2}-\frac{\bar{b}}{2n^2}\right)}{\rho}\right)U_{n}\left(\frac{\cos\left(\frac{\tha}{2}-\frac{a}{2n^2}\right)}{\rho}\right)\sin\left(\frac{\tha}{2}-\frac{a}{2n^2}\right)\\
\label{threeterm}&\qquad+U_{n+1}\left(\frac{\cos\left(\frac{\tha}{2}-\frac{a}{2n^2}\right)}{\rho}\right)U_{n}\left(\frac{\cos\left(\frac{\tha}{2}-\frac{\bar{b}}{2n^2}\right)}{\rho}\right)\sin\left(\frac{\tha}{2}-\frac{\bar{b}}{2n^2}\right)\\
\nonumber&\qquad+U_{n}\left(\frac{\cos\left(\frac{\tha}{2}-\frac{\bar{b}}{2n^2}\right)}{\rho}\right)U_{n}\left(\frac{\cos\left(\frac{\tha}{2}-\frac{a}{2n^2}\right)}{\rho}\right)\frac{1+\alpha}{\rho}\sin\left(\frac{\bar{b}-a}{2n^2}\right)\bigg]
\end{align}

Using basic angle addition formulas, we find
\begin{align*}
\frac{1}{\rho}\cos\left(\frac{\tha}{2}+\frac{t}{2n^2}\right)&=\cos\left(\frac{t}{2n^2}\right)+\frac{\alpha}{\rho}\sin\left(\frac{t}{2n^2}\right)=1+\frac{\alpha t}{2\rho n^2}+O(n^{-4})\\
\sin\left(\frac{\tha}{2}+\frac{t}{2n^2}\right)&=-\alpha\cos\left(\frac{t}{2n^2}\right)+\rho\sin\left(\frac{t}{2n^2}\right)=-\alpha+\frac{t\rho}{2n^2}+O(n^{-4})
\end{align*}
If $\mu^*$ is the measure of orthogonality for the polynomials $\{U_n\}_{n\geq0}$, then
\[
K_n(x,y;\mu^*)=\frac{\overline{U_{n}(y)}U_{n+1}(x)-U_{n}(x)\overline{U_{n+1}(y)}}{2(x-\bar{y})}
\]
(see \cite[Section 3]{SimonCD}).  Letting $x=\frac{1}{\rho}\cos\left(\frac{\tha}{2}-\frac{a}{2n^2}\right)$ and $y=\frac{1}{\rho}\cos\left(\frac{\tha}{2}-\frac{b}{2n^2}\right)$, we find that the first two terms in (\ref{threeterm}) can be rewritten
\begin{align*}
2\alpha\left(\frac{\alpha(a-\bar{b})}{2\rho n^2}\right)K_n\left(1-\frac{\alpha a(1+o(1))}{2\rho n^2},1-\frac{\alpha b(1+o(1))}{2\rho n^2};\mu^*\right)(1+o(1))
\end{align*}
as $\nri$.  By \cite[Theorem 1.4]{Danka}, we see that we can rewrite this as
\begin{align}\label{k11}
2\alpha\left(\frac{\alpha(a-\bar{b})}{2\rho n^2}\right)K_n\left(1,1;\mu^*\right)(1+o(1))\frac{\bbJ_{1/2}^*(\frac{\alpha a}{\rho},\frac{\alpha\bar{b}}{\rho})}{\bbJ_{1/2}^*(0,0)}
\end{align}
as $\nri$.  Using the fact that $U_n(1)=n+1$ (see \cite[page 37]{BE}) we find $K_n(1,1;\mu^*)=\frac{1}{6}(n+1)(n+2)(2n+3)$, so (\ref{k11}) simplifies to
\[
n\frac{\alpha^2(a-\bar{b})\bbJ_{1/2}^*(\frac{\alpha a}{\rho},\frac{\alpha\bar{b}}{\rho})}{3\rho\bbJ_{1/2}^*(0,0)}+o(n)
\]
as $\nri$.

To estimate the last term in (\ref{threeterm}), we use \eqref{chebform2} as in the proof of Lemma \ref{knormal} to see that
\begin{align*}
&U_n\left(\frac{1}{\rho}\cos\left(\frac{\tha}{2}+\frac{t}{2n^2}\right)\right)=U_n\left(1+\frac{\alpha t}{2\rho n^2}+o(n^{-2})\right)=O(n)
\end{align*}
as $\nri$.  Since $\sin\left(\frac{\bar{b}-a}{2n^2}\right)=O(n^{-2})$ as $\nri$, we see that the last term in (\ref{threeterm}) is $O(1)$ as $\nri$.
Combining all that we have learned so far yields
\begin{align}\label{kform}
&K_n(e^{i(\tha-\frac{a}{n^2})},e^{i(\tha-\frac{b}{n^2})};\mu_{\alpha})=o(n^3)+n^3\left(\frac{2(1+\alpha)\alpha^2\bbJ_{1/2}^*(\frac{\alpha a}{\rho},\frac{\alpha\bar{b}}{\rho})}{3\rho^2\bbJ_{1/2}^*(0,0)}\right)
\end{align}
as $\nri$ when $a\neq\bar{b}$.  By continuity and Lemma \ref{knormal} we may extend this formula to the case $a=\bar{b}$ and deduce that the error term can be estimated uniformly for $a$ and $b$ in compact subsets of $\bbC$.  Setting $a=b=0$, we find
\[
K_n(e^{i\tha},e^{i\tha};\mu_{\alpha})=o(n^3)+n^3\left(\frac{2(1+\alpha)\alpha^2}{3\rho^2}\right)\]
as $\nri$ (see also \cite[Theorem 1.2]{DT}).  We have thus proven
\[
\lim_{\nri}\frac{K_n(e^{i(\tha-\frac{a}{n^2})},e^{i(\tha-\frac{b}{n^2})};\mu_{\alpha})}{K_n(e^{i\tha},e^{i\tha};\mu_{\alpha})}=\frac{\bbJ_{1/2}^*(\frac{\alpha a}{\rho},\frac{\alpha\bar{b}}{\rho})}{\bbJ_{1/2}^*(0,0)},
\]
where the convergence is uniform for $a$ and $b$ in compact subsets of $\bbC$.  This proves the desired result in the case $\mu=\mu_{\alpha}$ when $\alpha\in(-1,0)$.

\subsection{The General Case}\label{general}

To prove the general case, we will use the following theorem, which is due to Bourgade and appears in a more general form as \cite[Theorem 3.10]{Bourgade}.

\begin{theorem}[\cite{Bourgade}]\label{bourgade310}
Let $\mu$ be as in Theorem \ref{uend} and let $\mu_{\alpha}$ be as in Section \ref{model}.  If
\begin{equation}\label{rlimsup}
\lim_{r\rightarrow0^+}\limsup_{\nri}\frac{K_{n}(e^{i(\tha-\frac{t}{n^2})},e^{i(\tha-\frac{t}{n^2})};\mu_{\alpha})}{K_{n-\left\lfloor rn\right\rfloor}(e^{i(\tha-\frac{t}{n^2})},e^{i(\tha-\frac{t}{n^2})};\mu_{\alpha})}=1
\end{equation}
uniformly for $t$ in compact subsets of $\bbR$, then uniformly for $a$ and $b$ in compact subsets of $\bbR$, it holds that
\[
\lim_{\nri}\left|\frac{h(\tha)K_{n}(e^{i(\tha-\frac{a}{n^2})},e^{i(\tha-\frac{b}{n^2})};\mu)-K_{n}(e^{i(\tha-\frac{a}{n^2})},e^{i(\tha-\frac{b}{n^2})};\mu_{\alpha})}{K_{n}(e^{i(\tha-\frac{a}{n^2})},e^{i(\tha-\frac{a}{n^2})};\mu)}\right|=0
\]
\end{theorem}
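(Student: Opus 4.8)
The plan is to obtain this statement by specializing \cite[Theorem 3.10]{Bourgade}, so the real task is to match our hypotheses with the ones that result requires; I sketch the mechanism so that the match is transparent. Throughout I will write $z_n=e^{i(\tha-a/n^2)}$, $w_n=e^{i(\tha-b/n^2)}$, and I will use the variational description of the Christoffel function,
\[
\lambda_n(\zeta;\nu):=\frac{1}{K_n(\zeta,\zeta;\nu)}=\min\left\{\int|P|^2\,d\nu:\ \deg P\le n,\ P(\zeta)=1\right\},
\]
valid for any probability measure $\nu$ on the circle and any $\zeta\in\bbC$, together with the facts that the minimizer is $\ell_n(z,\zeta;\nu):=K_n(z,\zeta;\nu)/K_n(\zeta,\zeta;\nu)$ and that $\int|\ell_n(\cdot,\zeta;\nu)|^2\,d\nu=\lambda_n(\zeta;\nu)$.

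The first and principal step is the diagonal comparison $\lambda_n(z_n;\mu)/(h(\tha)\lambda_n(z_n;\mu_{\alpha}))\to1$ as $\nri$, uniformly for $a$ in compact real sets. Fix a small arc $I$ around $e^{i\tha}$ on which continuity and positivity of $h$ at $\tha$ give $(1-\epsilon)h(\tha)w(\theta)\le h(\theta)w(\theta)\le(1+\epsilon)h(\tha)w(\theta)$, so that on $I$ the measure $\mu$ lies between $(1-\epsilon)h(\tha)\mu_{\alpha}$ and $(1+\epsilon)h(\tha)\mu_{\alpha}$, while the ``far'' part of $\mu$ (the piece of $hw\,d\theta/2\pi$ off $I$ together with $\tilde{\mu}$) is supported at a fixed distance from $e^{i\tha}$. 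For the upper bound on $\lambda_n(z_n;\mu)$, take the degree-$(n-\lfloor rn\rfloor)$ minimizer $P$ for $\mu_{\alpha}$ normalized by $P(z_n)=1$ and multiply it by a fast decreasing polynomial $Q$ of degree $\lfloor rn\rfloor$ with $Q(z_n)=1$, $|Q|\le 1+o(1)$ on $I$, and $|Q|\le\delta^{n}$ off $I$ for a suitably small fixed $\delta<1$; then $PQ$ is admissible at degree $n$, so splitting the integral over $I$ and its complement and bounding $\|P\|_{L^\infty}$ subexponentially by the regularity of $\mu_{\alpha}$ (immediate since $w>0$ a.e.\ on the arc), plus a Bernstein--Walsh estimate to absorb any mass point of $\mu_{\alpha}$, gives
\[
\lambda_n(z_n;\mu)\le(1+\epsilon)(1+o(1))\,h(\tha)\,\lambda_{n-\lfloor rn\rfloor}(z_n;\mu_{\alpha})+o\big(\lambda_n(z_n;\mu_{\alpha})\big).
\]
Dividing by $h(\tha)\lambda_n(z_n;\mu_{\alpha})$, using \eqref{rlimsup} to replace $\lambda_{n-\lfloor rn\rfloor}(z_n;\mu_{\alpha})$ by $\lambda_n(z_n;\mu_{\alpha})(1+o_r(1))$, and sending $\nri$, then $r\to0^{+}$, then $\epsilon\to0$, yields $\limsup_{\nri}\lambda_n(z_n;\mu)/(h(\tha)\lambda_n(z_n;\mu_{\alpha}))\le1$. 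The reverse inequality is the same argument with $\mu$ and $h(\tha)\mu_{\alpha}$ interchanged, now invoking the assumed regularity of $\mu$; the degree-reduction property for $\mu$ needed there follows from \eqref{rlimsup} combined with the one-sided estimate just proved on $I$.

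The second step promotes the diagonal comparison to the off-diagonal conclusion. Here one runs Lubinsky's observation that the extremal polynomial for $\mu$ at $z_n$ is an almost-extremal polynomial for $h(\tha)\mu_{\alpha}$ near $e^{i\tha}$ and conversely, which --- together with the uniform diagonal control from the first step and the normal-family bound of Lemma \ref{knormal} --- forces $\ell_n(\cdot,w_n;\mu)$ and $\ell_n(\cdot,w_n;\mu_{\alpha})$ to have the same local limit near $e^{i\tha}$; rewriting this in terms of kernels gives $h(\tha)K_n(z_n,w_n;\mu)-K_n(z_n,w_n;\mu_{\alpha})=o\big(K_n(z_n,z_n;\mu)\big)$ uniformly for $a,b$ in compact real sets, which is the assertion. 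All of this is precisely what \cite[Theorem 3.10]{Bourgade} packages; the only structural input it uses at $e^{i\tha}$ is that $w$ vanishes there like a square root --- the hard-edge profile responsible for the $n^{-2}$ scaling --- together with the assumed regularity, both of which we have.

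I expect the main obstacle to be the construction, for each fixed $r>0$ and all large $n$, of the polynomial $Q$ of degree $\lfloor rn\rfloor$ that equals $1$ at the point $z_n$ --- which sits within $O(n^{-2})$ of the hard edge $e^{i\tha}$ --- stays bounded on a fixed arc about $e^{i\tha}$, and decays geometrically in $n$ on the complementary arc with a rate fast enough to kill both the far part of $\mu$ and any mass point: this is a fast-decreasing-polynomial estimate of Ivanov--Totik type that must be arranged with some care because the weight degenerates at the endpoint. Once that polynomial is available, the remainder --- tracking the factors $h(\tha)$, $1\pm\epsilon$, and $(1-r)^{\pm1}$, and verifying that all error terms are uniform in $a,b$ --- is routine bookkeeping given \eqref{rlimsup}, Lemma \ref{knormal}, and regularity.
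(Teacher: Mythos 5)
Your proposal takes essentially the same route as the paper: the paper offers no proof of this statement at all, but imports it as a special case of Bourgade's Theorem 3.10, remarking only that the mutual regularity hypothesis in that theorem is supplied by the regularity of $\mu$ and $\mu_{\alpha}$ — exactly the reduction you propose. Your additional sketch of the underlying Christoffel-function comparison (localization with fast-decreasing polynomials, diagonal comparison, then Lubinsky-type promotion to the off-diagonal kernels) is a plausible outline of what the cited proof does, but the paper itself treats the theorem as an external result and does not reprove it.
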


\noindent\textit{Remark.}  Note that \cite[Theorem 3.10]{Bourgade} includes a mutual regularity condition for the two measures in question, but the regularity of $\mu$ and $\mu_{\alpha}$ immediately implies that this condition is satisfied.


\smallskip

The fact that $\mu_{\alpha}$ satisfies the condition \eqref{rlimsup} is a direct consequence of the calculations in Section \ref{model}.  All that remains then is to show that the conclusion of Theorem \ref{bourgade310} gives us the conclusion that we want.  For this purpose, the following lemma is essential.

\begin{lemma}\label{lambdarat}
Let $\mu$ be as in the statement of Theorem \ref{uend} and let $\mu_{\alpha}$ be as in Section \ref{model}.  For any $a\in\bbC$ it holds that
\[
\lim_{\nri}\frac{K_n(e^{i(\tha-\frac{a}{n^2})},e^{i(\tha-\frac{a}{n^2})};\mu_{\alpha})}{K_n(e^{i(\tha-\frac{a}{n^2})},e^{i(\tha-\frac{a}{n^2})};\mu)}=h(\tha)
\]
and the convergence is uniform for $a$ in compact subsets of $\bbC$.
\end{lemma}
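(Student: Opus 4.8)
The plan is to derive this from the conclusion of Theorem \ref{bourgade310} by a standard bootstrapping argument, exploiting the fact that we already have exact asymptotics for $K_n(\cdot,\cdot;\mu_\alpha)$ from Section \ref{model}. The first observation is that the hypothesis \eqref{rlimsup} is satisfied: from \eqref{kform} we know $K_n(e^{i(\tha-t/n^2)},e^{i(\tha-t/n^2)};\mu_\alpha)=n^3 c(t)(1+o(1))$ with $c(t)=\frac{2(1+\alpha)\alpha^2}{3\rho^2}\cdot\frac{\bbJ_{1/2}^*(\alpha t/\rho,\alpha t/\rho)}{\bbJ_{1/2}^*(0,0)}$ (which is finite and, by Proposition \ref{jz}, nonzero), and the same formula with $n$ replaced by $n-\lfloor rn\rfloor$ gives $(n-\lfloor rn\rfloor)^3 c(t)(1+o(1))$; hence the ratio in \eqref{rlimsup} tends to $(1-r)^{-3}$ as $\nri$ and then to $1$ as $r\to0^+$, uniformly for $t$ in compact subsets of $\bbR$. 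So Theorem \ref{bourgade310} applies.

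Next I would specialize the conclusion of Theorem \ref{bourgade310} to $b=\bar a$ (in fact to $a=b$ real first, then note everything extends). Writing $K_n^\mu:=K_n(e^{i(\tha-a/n^2)},e^{i(\tha-a/n^2)};\mu)$ and $K_n^{\mu_\alpha}$ for the corresponding diagonal kernel for $\mu_\alpha$, the theorem says
\[
\lim_{\nri}\frac{h(\tha)K_n^\mu-K_n^{\mu_\alpha}}{K_n^\mu}=0,
\]
i.e. $K_n^{\mu_\alpha}/K_n^\mu\to h(\tha)$, which is exactly the claim for real $a$. The one subtlety is that Theorem \ref{bourgade310} is stated for $a,b$ in compact subsets of $\bbR$, whereas Lemma \ref{lambdarat} asks for $a$ in compact subsets of $\bbC$ with uniform convergence. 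To upgrade, I would invoke normality: by Lemma \ref{knormal} the family $\{K_n^{\mu_\alpha}/n^3\}$ is normal on $\bbC$ (as a function of $a$, taking $b=a$ there), and the identical argument — using that $\mu$ also has a weight bounded below by a positive constant near $\tha$ up to the continuous factor $h$, together with \eqref{chebform2}-type bounds — shows $\{K_n^{\mu}/n^3\}$ is normal on $\bbC$ as well; alternatively one can bound $K_n^\mu$ above and below by constant multiples of $K_n^{\mu_\alpha}$ using $h(2\arcsin|\alpha|)>0$, continuity of $h$, and the extra mass $d\tilde\mu$ being supported away from $\tha$. Hence any subsequence of $\{K_n^{\mu_\alpha}/K_n^{\mu}\}$ has a locally uniformly convergent sub-subsequence; since every such limit is a holomorphic function of $a$ agreeing with the constant $h(\tha)$ on the real axis, it equals $h(\tha)$ identically, and the full family converges to $h(\tha)$ locally uniformly on $\bbC$.

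The main obstacle, and the step deserving the most care, is the normality/uniform-boundedness claim for $\{K_n^\mu/n^3\}$ on compact subsets of $\bbC$: unlike for $\mu_\alpha$, we have no closed form for $\varphi_n(z;\mu)$, so one cannot bound $K_n^\mu$ by direct computation. The cleanest route is probably to avoid it entirely by first establishing the two-sided bound $c_1 K_n^{\mu_\alpha}\le K_n^\mu\le c_2 K_n^{\mu_\alpha}$ for $a$ in a fixed compact real interval (using that $h$ is continuous and positive at $\tha$, that the absolutely continuous part of $\mu$ dominates a positive multiple of $\mu_\alpha$ near $\tha$, and a localization/Christoffel-function comparison), deduce the real-$a$ statement from Theorem \ref{bourgade310}, and only then extend to complex $a$ via the normality of $\{K_n^{\mu_\alpha}/n^3\}$ from Lemma \ref{knormal} combined with the uniform-on-reals convergence just obtained and an application of Vitali's theorem (a locally bounded sequence of holomorphic functions converging on a set with a limit point converges locally uniformly). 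This keeps the analytically delicate part confined to the explicitly computable model measure.
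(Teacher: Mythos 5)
Your route is genuinely different from the paper's: the paper proves this lemma directly by a Christoffel-function comparison in the style of \cite[Lemma 2.8]{SimUniv} and \cite[Theorem 7]{MNT}, independently of Theorem \ref{bourgade310}, whereas you try to bootstrap it from Bourgade's theorem. Your first step is fine and not circular (Theorem \ref{bourgade310} is external, its hypothesis \eqref{rlimsup} needs only the model computations, and taking $b=a$ real in its conclusion does give the real-diagonal case). The first genuine gap is in your extension to complex $a$: the quantity $K_n(e^{i(\tha-\frac{a}{n^2})},e^{i(\tha-\frac{a}{n^2})};\mu)$ is \emph{not} holomorphic in $a$; it is the restriction to $b=a$ of the two-variable holomorphic family $K_n(e^{i(\tha-\frac{a}{n^2})},e^{i(\tha-\frac{\bar{b}}{n^2})};\mu)$, and its limit genuinely involves $\bar{a}$ (Theorem \ref{uend} itself produces $\bbJ_{1/2}^*(\frac{\alpha a}{\rho},\frac{\alpha\bar{a}}{\rho})$). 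So ``every subsequential limit is a holomorphic function of $a$ agreeing with $h(\tha)$ on the real axis, hence constant'' fails as stated: agreement of two-variable limits on $\{(t,t):t\in\bbR\}$ forces equality only on the holomorphic diagonal $\{(a,a)\}$, not on the anti-diagonal $\{(a,\bar{a})\}$ that the lemma concerns (think of $a-b$, which vanishes on the former but not the latter). The repair is available but must be spelled out: use the off-diagonal real case of Theorem \ref{bourgade310} (all $a,b\in\bbR$), together with the diagonal $O(n^3)$ bound, to get agreement of the two-variable subsequential limits on $\bbR^2$, which \emph{is} a uniqueness set in $\bbC^2$; then restrict to $b=a$ and divide, using Proposition \ref{jz} to know the limit is nonvanishing there.

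The second, and more serious, gap is the step you yourself flag as the main obstacle: the local boundedness of $K_n(\cdot,\cdot;\mu)/n^3$ at the $n^{-2}$ scale, equivalently your two-sided bound $c_1K_n^{\mu_\alpha}\le K_n^{\mu}\le c_2K_n^{\mu_\alpha}$. This is asserted, not proved, and it is the actual content of the lemma. No global domination between $\mu$ and $\mu_\alpha$ is available: $h$ is only assumed continuous and positive \emph{at} $\tha$, and $\tilde{\mu}$ and the behavior of $h$ away from $\tha$ are essentially arbitrary, so the measures are comparable only on a small arc around $\tha$. Converting that local comparability into comparability of Christoffel functions (hence kernels) at points $e^{i(\tha-\frac{a}{n^2})}$, including complex $a$ off the circle, is exactly where the regularity hypothesis and the M\'{a}t\'{e}--Nevai--Totik localization machinery enter; crude tools such as Bernstein--Walsh combined with \eqref{regprop} only give $e^{o(n)}$ losses, not the polynomial bounds you need. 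In other words, the irreducible core of your argument is the same Christoffel-function localization the paper invokes by analogy with \cite[Lemma 2.8]{SimUniv} and \cite[Theorem 7]{MNT}, and since you defer rather than execute it, the proposal as written does not close; what your detour through Theorem \ref{bourgade310} buys is only the real-diagonal statement, which was not the hard part.
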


The proof of Lemma \ref{lambdarat} is very much analogous to the proof of \cite[Lemma 2.8]{SimUniv}, so we will not present the details here.  It is based on Christoffel functions and relies heavily on ideas from the proof of \cite[Theorem 7]{MNT}.

\begin{corollary}\label{krat}
Let $\mu$ be as in the statement of Theorem \ref{uend}.  Then
\begin{equation}\label{krat1}
\lim_{\nri}\frac{K_{n}(e^{i\tha},e^{i\tha};\mu)}{K_{n}(e^{i(\tha-\frac{a}{n^2})},e^{i(\tha-\frac{a}{n^2})};\mu)}=\frac{\bbJ_{1/2}^*(0,0)}{\bbJ_{1/2}^*(\frac{\alpha a}{\rho},\frac{\alpha\bar{a}}{\rho})}>0
\end{equation}
and the convergence is uniform on compact subsets of $\bbC$.  Furthermore, the collection
\[
\left\{\frac{K_{n}(e^{i(\tha-\frac{a}{n^2})},e^{i(\tha-\frac{\bar{b}}{n^2})};\mu)}{K_{n}(e^{i\tha},e^{i\tha};\mu)}\right\}_{n\in\bbN}
\]
is a normal family on $\bbC^2$ in the variables $a$ and $b$.
\end{corollary}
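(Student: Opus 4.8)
The plan is to obtain both assertions by assembling the model case established in Section \ref{model} with Lemma \ref{lambdarat}; no genuinely new estimate is required, so the argument is essentially bookkeeping with ratios of kernels.

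For the limit \eqref{krat1} I would interpolate through the Geronimus measure by writing, for fixed $a\in\bbC$ and each $n$,
\[
\frac{K_{n}(e^{i\tha},e^{i\tha};\mu)}{K_{n}(e^{i(\tha-\frac{a}{n^2})},e^{i(\tha-\frac{a}{n^2})};\mu)}=\frac{K_{n}(e^{i\tha},e^{i\tha};\mu)}{K_{n}(e^{i\tha},e^{i\tha};\mu_{\alpha})}\cdot\frac{K_{n}(e^{i\tha},e^{i\tha};\mu_{\alpha})}{K_{n}(e^{i(\tha-\frac{a}{n^2})},e^{i(\tha-\frac{a}{n^2})};\mu_{\alpha})}\cdot\frac{K_{n}(e^{i(\tha-\frac{a}{n^2})},e^{i(\tha-\frac{a}{n^2})};\mu_{\alpha})}{K_{n}(e^{i(\tha-\frac{a}{n^2})},e^{i(\tha-\frac{a}{n^2})};\mu)}.
\]
Lemma \ref{lambdarat} applied at parameter $0$ (using $h(\tha)>0$) handles the first factor, which tends to $h(\tha)^{-1}$; applied at parameter $a$ it handles the third factor, which tends to $h(\tha)$; and the middle factor is exactly the reciprocal of the model-case limit of Section \ref{model}, so it tends to $\bbJ_{1/2}^*(0,0)/\bbJ_{1/2}^*(\tfrac{\alpha a}{\rho},\tfrac{\alpha\bar a}{\rho})$. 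All three convergences are uniform for $a$ in compact subsets of $\bbC$, so the product converges uniformly to the claimed value; passing to the reciprocal in the middle factor is legitimate because Proposition \ref{jz} guarantees $\bbJ_{1/2}^*(t,\bar t)\neq0$ for every $t$ (here $t=\alpha a/\rho$, and $\overline{\alpha a/\rho}=\alpha\bar a/\rho$ since $\alpha,\rho$ are real), so the limit is continuous and bounded away from $0$ on compacts. The strict positivity in \eqref{krat1} follows at once: each diagonal kernel $K_n(w,w;\mu)=\sum_{m\le n}|\varphi_m(w;\mu)|^2$ is a positive real, so the ratio on the left is positive for every $n$, whence its limit is nonnegative, and it is nonzero again by Proposition \ref{jz}.

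For the normality statement I would argue by Montel's theorem in two complex variables, so the two points to verify are holomorphy and local boundedness. Holomorphy is the one place demanding care: writing
\[
K_{n}(e^{i(\tha-\frac{a}{n^2})},e^{i(\tha-\frac{\bar b}{n^2})};\mu)=\sum_{m=0}^{n}\varphi_m(e^{i(\tha-\frac{a}{n^2})};\mu)\,\overline{\varphi_m(e^{i(\tha-\frac{\bar b}{n^2})};\mu)},
\]
the first factor of each summand is entire in $a$, and---because $\tha\in\bbR$---the conjugate of the second factor equals the polynomial with conjugated coefficients evaluated at $e^{-i(\tha-\frac{b}{n^2})}$, hence is entire in $b$; thus the whole expression is entire on $\bbC^2$, and dividing by the positive constant $K_n(e^{i\tha},e^{i\tha};\mu)$ preserves this. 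For local boundedness I would use the Cauchy--Schwarz inequality for the reproducing kernel,
\[
\left|\frac{K_{n}(e^{i(\tha-\frac{a}{n^2})},e^{i(\tha-\frac{\bar b}{n^2})};\mu)}{K_{n}(e^{i\tha},e^{i\tha};\mu)}\right|\le\left(\frac{K_{n}(e^{i(\tha-\frac{a}{n^2})},e^{i(\tha-\frac{a}{n^2})};\mu)}{K_{n}(e^{i\tha},e^{i\tha};\mu)}\cdot\frac{K_{n}(e^{i(\tha-\frac{\bar b}{n^2})},e^{i(\tha-\frac{\bar b}{n^2})};\mu)}{K_{n}(e^{i\tha},e^{i\tha};\mu)}\right)^{1/2},
\]
and note that each of the two diagonal ratios on the right is the reciprocal of the expression in \eqref{krat1} (the second one with $a$ replaced by $\bar b$, which still ranges over a compact set as $b$ does), hence converges uniformly on compact subsets of $\bbC$ by the first part and so is bounded uniformly in $n$ on compacts. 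This bounds the left-hand side locally uniformly in $(a,b)$ and $n$; the finitely many remaining small $n$ are individually continuous, so the whole family is locally bounded, and Montel's theorem finishes the argument. Apart from the holomorphy bookkeeping just described, every step is a direct consequence of results already established.
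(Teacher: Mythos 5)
Your proposal is correct and follows essentially the same route as the paper: the limit \eqref{krat1} is obtained from Lemma \ref{lambdarat} together with the model-case limit for $\mu_{\alpha}$ (with Proposition \ref{jz} justifying the reciprocal), and the normality statement from the reproducing-kernel Cauchy--Schwarz inequality, the uniformity in \eqref{krat1}, and Montel's theorem. Your extra care with the holomorphy in $(a,b)$ and the positivity of the limit simply spells out details the paper leaves implicit.
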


\noindent\textit{Remark.}  By Proposition \ref{jz}, the right-hand side of \eqref{krat1} is a well-defined positive real number.

\begin{proof}
The limit \eqref{krat1} follows from Lemma \ref{lambdarat} and the fact that the limit holds when $\mu=\mu_{\alpha}$.  The statement about normality follows from Montel's theorem, the Cauchy-Schwarz inequality, and the uniformity in the limit \eqref{krat1}.
\end{proof}

\begin{proof}[Proof of Theorem \ref{uend}]
We have already seen that we may apply Theorem \ref{bourgade310}.  By applying Corollary \ref{krat}, we may rewrite the conclusion of Theorem \ref{bourgade310} as
\[
\lim_{\nri}\left|\frac{K_{n}(e^{i(\tha-\frac{a}{n^2})},e^{i(\tha-\frac{b}{n^2})};\mu)}{K_{n}(e^{i\tha},e^{i\tha};\mu)}-\frac{K_{n}(e^{i(\tha-\frac{a}{n^2})},e^{i(\tha-\frac{b}{n^2})};\mu_{\alpha})}{h(\tha)K_{n}(e^{i\tha},e^{i\tha};\mu)}\right|=0
\]
By applying Lemma \ref{lambdarat}, we may rewrite this as
\[
\lim_{\nri}\left|\frac{K_{n}(e^{i(\tha-\frac{a}{n^2})},e^{i(\tha-\frac{b}{n^2})};\mu)}{K_{n}(e^{i\tha},e^{i\tha};\mu)}-\frac{K_{n}(e^{i(\tha-\frac{a}{n^2})},e^{i(\tha-\frac{b}{n^2})};\mu_{\alpha})}{K_{n}(e^{i\tha},e^{i\tha};\mu_{\alpha})}\right|=0,
\]
and hence when $a$ and $b$ are real, the desired convergence follows from the calculations in Section \ref{model}.  The desired uniform convergence on compact subsets of $\bbC^2$ follows from the statement about normal families in Corollary \ref{krat}.
\end{proof}



\end{document}